\def\beqnn{\begin{eqnarray*}}\def\eeqnn{\end{eqnarray*}}
\newtheorem{theorem}{Theorem}[section]
\newtheorem{proposition}[theorem]{Proposition}
\newtheorem{corollary}[theorem]{Corollary}
\newtheorem{lemma}[theorem]{Lemma}
\theoremstyle{definition}
\newtheorem{remark}[theorem]{Remark}
\theoremstyle{question}
\newtheorem{question}[theorem]{Question}
\numberwithin{equation}{section}
\begin{document}

\title{On the Hilbert-type operators acting from function spaces into sequence spaces}


\author{Jianjun Jin}
\address{School of Mathematics Sciences, Hefei University of Technology, Xuancheng Campus, Xuancheng 242000, P.R.China}
\email{ jin@hfut.edu.cn, jinjjhb@163.com}


\thanks{The author was supported by National Natural Science Foundation of China(Grant Nos. 11501157). }


\subjclass[2010]{26D15; 47A30}



\keywords{Hilbert-type operator; boundedness of operator; compactness of operator; norm of operator; Carleson measure.}

\begin{abstract}
In this paper we introduce and study some Hilbert-type operators acting from the function spaces into the sequence spaces.  We give some sufficient and necessary conditions for the boundedness and compactness of these Hilbert-type operators.  Also, for some special cases, we obtain the sharp estimates for the norms of certain Hilbert-type operators.
\end{abstract}

\maketitle
\section{Introductions and main results}

In this paper, for two positive numbers $A, B$, we write $A \preceq B$, or $A \succeq B$, if there exists a positive constant $C$  independent of the arguments such that $A \leq C B$, or $A \geq C B$, respectively. We will write $A \asymp B$ if both $A \preceq B$ and $A \succeq B$.

Let $p>1$.  We denote by $q$ the conjugate index of $p$, i.e., $\frac{1}{p}+\frac{1}{q}=1$. We denote  the interval $(0, +\infty)$ by $\mathbb{R}_{+}$. Let $\mathcal{M}(\mathbb{R}_{+})$ be the class of all measurable functions on $\mathbb{R}_{+}$. The usual Lesbegue space of measurable functions on $\mathbb{R}_{+}$, denoted by  $L^p(\mathbb{R}_{+})$,  is defined as
\begin{equation*}L^{p}(\mathbb{R}_{+}):=\{f\in \mathcal{M}(\mathbb{R}_{+}): \|f\|_{p}=(\int_{\mathbb{R}_{+}} |f(x)|^p\,dx)^{\frac{1}{p}}<+\infty\}.\end{equation*}

We denote by $l^{p}$ the Lesbegue space of infinite sequences, i.e.,
\begin{equation*}l^{p}:=\{a=\{a_n\}_{n=1}^{\infty}: \|a\|_{p}=(\sum_{n=1}^{\infty} |a_{n}|^p)^{\frac{1}{p}}<+\infty \}.\end{equation*}
The classical Hilbert operator $H$, induced by Hilbert kernel $\frac{1}{x+y}$, is defined as
\begin{equation*} H(f)(y):=\int_{\mathbb{R}_{+}} \frac{f(x)}{x+y}\,dx, \: f\in \mathcal{M}(\mathbb{R}_{+}), \: y\in \mathbb{R}_{+}.\end{equation*}

The Hilbert series operator, denoted by $\widetilde{H}$, is similarly defined as
\begin{equation*} \widetilde{H}(a)(n)=\sum_{n=1}^\infty \frac{a_m}{m+n}, \: a=\{a_m\}_{m=1}^{\infty}, \: n\in \mathbb{N}.
\end{equation*}

It is well known that
\begin{proposition}\label{h-1}
Let $p>1$. Then $H$($\widetilde{H}$) is bounded on $L^{p}(\mathbb{R}_{+})$($l^{p}$), respectively. Moreover, the norm $\|H\|$($\|\widetilde{H}\|$) of
$H$($\widetilde{H}$) is $\pi \csc{\frac{\pi}{p}}$, respectively, where
$$\|H\|:=\sup_{f(\neq \theta)\in L^p(\mathbb{R}_{+})}\frac{\|Hf\|_{p}}{\|f\|_{p}},\quad \|\widetilde{H}\|:=\sup_{a(\neq \theta)\in l^p}\frac{\|\widetilde{H}a\|_{p}}{\|a\|_p}. $$
\end{proposition}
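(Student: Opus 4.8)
The plan is to prove the operator bound ($\|Hf\|_p\le\pi\csc(\pi/p)\|f\|_p$, and similarly for $\widetilde H$) and its sharpness separately, treating $H$ first and then transferring the argument to $\widetilde H$ by replacing integrals with sums. The only computational input throughout is the beta-type identity $\int_0^\infty t^{s-1}(1+t)^{-1}\,dt=\pi/\sin(\pi s)$ for $0<s<1$, which is exactly what produces the constant $\pi\csc(\pi/p)$ (recall $\sin(\pi/q)=\sin(\pi/p)$).

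For the upper bound I would use Schur's test, exploiting that the kernel $(x+y)^{-1}$ is homogeneous of degree $-1$. Assuming $f\ge0$ (which suffices), factor
\[
\frac{f(x)}{x+y}=\Bigl[(x+y)^{-1/q}(y/x)^{1/(pq)}\Bigr]\Bigl[(x+y)^{-1/p}(x/y)^{1/(pq)}f(x)\Bigr]
\]
and apply Hölder's inequality in $x$ with exponents $q$ and $p$. The substitution $x=yt$ shows the first factor integrates to the $y$-independent constant $\int_0^\infty t^{-1/p}(1+t)^{-1}\,dt=\pi\csc(\pi/p)$, so that
\[
(Hf(y))^p\le\big(\pi\csc(\tfrac{\pi}{p})\big)^{p/q}\int_0^\infty\frac{(x/y)^{1/q}}{x+y}\,f(x)^p\,dx .
\]
Integrating in $y$, applying Fubini, and evaluating $\int_0^\infty y^{-1/q}(x+y)^{-1}\,dy=x^{-1/q}\pi\csc(\pi/p)$ by the same substitution, the powers of $x$ cancel and one obtains $\|Hf\|_p^p\le(\pi\csc(\pi/p))^{p/q+1}\|f\|_p^p=(\pi\csc(\pi/p))^p\|f\|_p^p$, since $p/q+1=p$.

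For sharpness I would test on the near-extremal family $f_\epsilon(x)=x^{-1/p-\epsilon}\mathbf{1}_{[1,\infty)}(x)$, $\epsilon>0$, for which $\|f_\epsilon\|_p^p=(\epsilon p)^{-1}$. Writing $Hf_\epsilon(y)=y^{-1/p-\epsilon}\int_{1/y}^\infty t^{-1/p-\epsilon}(1+t)^{-1}\,dt$ and splitting $\int_{1/y}^\infty=\int_0^\infty-\int_0^{1/y}$ with $\int_0^{1/y}t^{-1/p-\epsilon}(1+t)^{-1}\,dt=O(y^{-1/q+\epsilon})$ uniformly for small $\epsilon$, one gets $Hf_\epsilon(y)\ge y^{-1/p-\epsilon}\big(\pi\csc(\pi(\tfrac1q-\epsilon))-O(y^{-1/q+\epsilon})\big)$ for $y$ past an $\epsilon$-free threshold. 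Raising to the $p$-th power via $(a-b)^p\ge a^p-pa^{p-1}b$ and integrating isolates the divergent term $(\epsilon p)^{-1}(\pi\csc(\pi(\tfrac1q-\epsilon)))^p$ plus a bounded remainder; dividing by $\|f_\epsilon\|_p^p$ and letting $\epsilon\to0^+$ gives $\|H\|\ge\pi\csc(\pi/p)$, which together with the upper bound fixes $\|H\|$.

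For $\widetilde H$ the same two steps go through once sums are compared with integrals. The identical Hölder factorization gives $|\widetilde Ha(n)|^p\le\big(\sum_{m}\tfrac{n^{1/p}m^{-1/p}}{m+n}\big)^{p/q}\sum_m\tfrac{(m/n)^{1/q}}{m+n}|a_m|^p$; since $x\mapsto n^{1/p}x^{-1/p}(x+n)^{-1}$ is strictly decreasing on $(0,\infty)$, one has $\sum_{m\ge1}(\cdot)\le\int_0^\infty(\cdot)\,dx=\pi\csc(\pi/p)$, and likewise $\sum_{n\ge1}n^{-1/q}(m+n)^{-1}\le\int_0^\infty y^{-1/q}(m+y)^{-1}\,dy=m^{-1/q}\pi\csc(\pi/p)$; summing in $n$ then yields $\|\widetilde Ha\|_p\le\pi\csc(\pi/p)\|a\|_p$. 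Sharpness follows by testing on $a^{(\epsilon)}=\{m^{-1/p-\epsilon}\}_{m\ge1}$, using $\|a^{(\epsilon)}\|_p^p=\sum_{m\ge1}m^{-1-\epsilon p}\asymp(\epsilon p)^{-1}$ and $\sum_{m\ge1}m^{-1/p-\epsilon}(m+n)^{-1}\ge\int_1^\infty x^{-1/p-\epsilon}(x+n)^{-1}\,dx$ (again by monotonicity), after which the estimate runs exactly as in the continuous case. I expect the sharpness half—keeping the $\epsilon$-dependent error terms under uniform control, together with the monotonicity bookkeeping in the discrete case—to be the only real obstacle; the upper bounds are routine once the degree $-1$ homogeneity of the kernel is exploited.
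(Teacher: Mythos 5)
Your argument is correct, and the paper itself offers no proof of this proposition (it is quoted as classical, going back to Hardy--Littlewood--P\'olya), so there is nothing to diverge from; more to the point, your scheme --- the weighted H\"older/Schur factorization with weight $(y/x)^{1/(pq)}$ reducing everything to $\int_0^\infty t^{s-1}(1+t)^{-1}\,dt=\pi\csc(\pi s)$, plus near-extremizers $x^{-1/p-\epsilon}\mathbf{1}_{[1,\infty)}$ with a Bernoulli-type lower bound --- is exactly the technique the paper deploys in Section 2 to prove Theorem \ref{m-1-2}, of which this proposition is the prototype. The only points needing care are the ones you flag: restricting the sharpness integral to $y$ beyond a threshold where $L(\epsilon)-Cy^{-1/q+\epsilon}$ is positive before applying $(a-b)^p\ge a^p-pa^{p-1}b$, and the monotonicity of $x\mapsto x^{-1/p}(x+n)^{-1}$ justifying the sum-versus-integral comparisons in the discrete case; both go through as you describe.
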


Recently, many attentions have been paid to the study of the boundedness and estimates for the norms of the following type operator $H_K$, which is defined as
\begin{equation*} H_{K}(f)(n):=\int_{\mathbb{R}_{+}} f(x)K(x ,n)\,dx, \: f\in \mathcal{M}(\mathbb{R}_{+}), \: n\in \mathbb{N},\end{equation*}
where $K(x,y)$ is a non-negative function on $\mathbb{R}_{+}\times \mathbb{R}_{+}$. For more detailed  introductions to  this topic, see the	book \cite{YL} of Yang and Lokenath,  and the references cited therein.  It should be pointed out that Hardy et. al. first considered this topic in \cite{HLP} (Theorem 351).

In particular, take $K(x, y)=\frac{1}{x+y}$, we get the Hilbert-type operator $\widehat{H}$ as follows.
$$\widehat{H}(f)(n):=\int_{\mathbb{R}_{+}}\frac{f(x)}{x+n}dx,\, f\in   \mathcal{M}(\mathbb{R}_{+}), n\in \mathbb{N}.$$

In this paper, we continue to study this topic. By introducing some parameters,  we define the following Hilbert-type operator $\mathbf{H}_{\theta_1, \theta_2, \lambda}^{\alpha, \beta}$ as
$$\mathbf{H}_{\theta_1, \theta_2, \lambda}^{\alpha, \beta}(f)(n):=n^{\frac{1}{p}[(\theta_2-1)+\beta\theta_2]}\int_{\mathbb{R}_{+}}\frac{x^{\frac{1}{q}[(\theta_1-1)+\alpha\theta_1]}f(x)}{x^{\alpha \theta_1}(x^{\theta_1}+n^{\theta_2})^{\lambda}}dx,\, f\in  \mathcal{M}(\mathbb{R}_{+}),\; n\in \mathbb{N},$$
where $\lambda>0$, $0<\theta_1, \theta_2 \leq 1$, $-1<\alpha, \beta <p-1$.

When $\theta_1=\theta_2=\lambda=1$, $\alpha=\beta=0$, the operator $\mathbf{H}_{\theta_1, \theta_2, \lambda}^{\alpha, \beta}$ reduces to the operator $\widehat{H}$.  We first study the boundedness of $\mathbf{H}_{\theta_1, \theta_2, \lambda}^{\alpha, \beta}$. We will prove that

\begin{theorem}\label{m-1-1}
Let $p>1$, $\lambda>0$, $0<\theta_1, \theta_2 \leq 1$, $-1<\alpha, \beta <p-1$,  and $\mathbf{H}_{\theta_1, \theta_2, \lambda}^{\alpha, \beta}$ be defined as obove. Then  $\mathbf{H}_{\theta_1, \theta_2, \lambda}^{\alpha, \beta}$ is bounded from  $L^p(\mathbb{R}_{+})$ into $l^p$  if and only if $\lambda \geq 1+\frac{1}{p}(\beta-\alpha).$\end{theorem}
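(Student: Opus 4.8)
The plan is to prove the two implications separately. Write $\lambda_{0}:=1+\frac{1}{p}(\beta-\alpha)$ and, using positivity of the kernel, assume throughout that $f\ge 0$. It is convenient to set $c:=\frac{1}{q}[(\theta_{1}-1)+\alpha\theta_{1}]$ and $d:=\frac{1}{p}[(\theta_{2}-1)+\beta\theta_{2}]$, so that
$$\mathbf{H}_{\theta_{1},\theta_{2},\lambda}^{\alpha,\beta}(f)(n)=n^{d}\int_{\mathbb{R}_{+}}\frac{x^{c-\alpha\theta_{1}}f(x)}{(x^{\theta_{1}}+n^{\theta_{2}})^{\lambda}}\,dx,$$
and to record the elementary identity $c-\alpha\theta_{1}+1-\frac{1}{p}=\frac{\theta_{1}}{p}(p-1-\alpha)$, which will be used on both sides.

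For the sufficiency I would first reduce to $\lambda=\lambda_{0}$. Since $n\ge 1$ forces $x^{\theta_{1}}+n^{\theta_{2}}\ge 1$, the map $\lambda\mapsto(x^{\theta_{1}}+n^{\theta_{2}})^{-\lambda}$ is nonincreasing, so $\mathbf{H}_{\theta_{1},\theta_{2},\lambda}^{\alpha,\beta}(f)(n)\le\mathbf{H}_{\theta_{1},\theta_{2},\lambda_{0}}^{\alpha,\beta}(f)(n)$ pointwise whenever $\lambda\ge\lambda_{0}$ and $f\ge 0$; hence it suffices to treat $\lambda=\lambda_{0}$. For that I would use a weighted Hölder splitting depending on a real parameter $a$: with $b:=p(c-\alpha\theta_{1})-a(p-1)$ one has $\frac{a}{q}+\frac{b}{p}=c-\alpha\theta_{1}$, so Hölder's inequality (exponents $q,p$) gives
$$\mathbf{H}_{\theta_{1},\theta_{2},\lambda_{0}}^{\alpha,\beta}(f)(n)^{p}\le n^{dp}\Big(\int_{\mathbb{R}_{+}}\frac{x^{a}\,dx}{(x^{\theta_{1}}+n^{\theta_{2}})^{\lambda_{0}}}\Big)^{p/q}\int_{\mathbb{R}_{+}}\frac{x^{b}f(x)^{p}\,dx}{(x^{\theta_{1}}+n^{\theta_{2}})^{\lambda_{0}}}.$$
The substitution $t=x^{\theta_{1}}/n^{\theta_{2}}$ turns the first integral into a Beta integral, finite and equal to a constant times $n^{(a+1)\theta_{2}/\theta_{1}-\theta_{2}\lambda_{0}}$ provided $-1<a<\theta_{1}\lambda_{0}-1$. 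Summing over $n$ (Tonelli) then reduces the matter to showing $\int_{\mathbb{R}_{+}}x^{b}f^{p}\Sigma(x)\,dx\preceq\|f\|_{p}^{p}$, where $\Sigma(x):=\sum_{n\ge 1}n^{\mu}(x^{\theta_{1}}+n^{\theta_{2}})^{-\lambda_{0}}$ and $\mu:=dp+\frac{p}{q}\big[(a+1)\theta_{2}/\theta_{1}-\theta_{2}\lambda_{0}\big]$.

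The decisive step is the estimate for $\Sigma$. Splitting the sum at $n\asymp x^{\theta_{1}/\theta_{2}}$ and comparing each piece with the corresponding integral shows, whenever $-1<\mu<\theta_{2}\lambda_{0}-1$, that $\Sigma(x)\preceq 1$ for $0<x\le 1$ and $\Sigma(x)\preceq x^{\theta_{1}(\mu+1)/\theta_{2}-\theta_{1}\lambda_{0}}$ for $x\ge 1$. Consequently $x^{b}\Sigma(x)\preceq 1$ on all of $\mathbb{R}_{+}$, hence the desired bound, as soon as $b\ge 0$ and $b':=b+\theta_{1}(\mu+1)/\theta_{2}-\theta_{1}\lambda_{0}\le 0$. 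This is exactly where $\lambda=\lambda_{0}$ enters: a direct computation (with systematic cancellation, using $\frac{p}{q}=p-1$ and the identity above) shows that $b'$ is independent of $a$ and equals $\theta_{1}[(p+\beta-\alpha)-p\lambda_{0}]=0$. Since $b'=0$ forces $\mu=\theta_{2}\lambda_{0}-1-b\theta_{2}/\theta_{1}$, the two remaining requirements become $-1<a<\theta_{1}\lambda_{0}-1$ and $0<b<\theta_{1}\lambda_{0}$; unwinding $b=p(c-\alpha\theta_{1})-a(p-1)$, these carve out an explicit interval for $a$ whose nonemptiness is guaranteed by the standing hypotheses $\alpha<p-1$ and $\beta>-1$ (the only nontrivial sub-inequality, $\frac{p(c-\alpha\theta_{1})-\theta_{1}\lambda_{0}}{p-1}<\theta_{1}\lambda_{0}-1$, simplifies via the identity above to $\beta>-1$). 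Choosing such an $a$ completes the sufficiency.

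For the necessity I would argue by contradiction using the power test functions $f_{N}(x):=x^{-1/p-s}\chi_{(1,N)}(x)$, with $s>0$ small and $N\to\infty$, for which $\|f_{N}\|_{p}^{p}=\frac{1-N^{-ps}}{ps}\le\frac{1}{ps}$. Restricting the defining integral to $1<x<n^{\theta_{2}/\theta_{1}}$, where $x^{\theta_{1}}+n^{\theta_{2}}\le 2n^{\theta_{2}}$, one obtains, for $N_{0}\le n\le N^{\theta_{1}/\theta_{2}}$, the lower bound $\mathbf{H}_{\theta_{1},\theta_{2},\lambda}^{\alpha,\beta}(f_{N})(n)\succeq n^{E(s)}$ with $E(s):=d-\theta_{2}\lambda+\frac{\theta_{2}}{\theta_{1}}(c-\alpha\theta_{1}+1-\tfrac{1}{p}-s)$; by the identity above, $E(0)=\theta_{2}(\lambda_{0}-\lambda)-\frac{1}{p}$. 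If $\lambda<\lambda_{0}$ then $pE(0)>-1$, so for $s>0$ small enough $pE(s)\ge-1$, whence $\|\mathbf{H}_{\theta_{1},\theta_{2},\lambda}^{\alpha,\beta}(f_{N})\|_{p}^{p}\succeq\sum_{N_{0}\le n\le N^{\theta_{1}/\theta_{2}}}n^{pE(s)}\to\infty$ as $N\to\infty$, while $\|f_{N}\|_{p}$ stays bounded---contradicting boundedness. Hence $\lambda\ge\lambda_{0}$ is necessary. The main obstacle is the bookkeeping in the sufficiency part---ensuring a single parameter $a$ meets all convergence constraints and sign conditions at once---and isolating the $a$-independence of $b'$ at $\lambda=\lambda_{0}$ is what makes this transparent.
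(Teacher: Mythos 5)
Your proposal is correct and follows essentially the same route as the paper: for sufficiency you reduce to $\lambda=1+\frac{1}{p}(\beta-\alpha)$ via the monotonicity of $(x^{\theta_1}+n^{\theta_2})^{-\lambda}$ (using $n\geq 1$) and then run a weighted H\"{o}lder/Schur-test argument, which is exactly how the paper proceeds --- it simply invokes Theorem \ref{m-1-2}, whose proof is your splitting for one particular choice of the parameter $a$ --- while for necessity you use the same power test functions $x^{-\frac{1}{p}-s}$ (truncated at $N$ rather than the paper's single function $\varepsilon^{1/p}x^{-(1+\theta_1\varepsilon)/p}$ on $[1,\infty)$). Your key computations (the $a$-independence of $b'$ with $b'=0$ at $\lambda=\lambda_0$, the nonemptiness of the admissible interval for $a$ under $-1<\alpha,\beta<p-1$, and $E(0)=\theta_2(\lambda_0-\lambda)-\frac{1}{p}$) all check out.
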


Next, we consider the case when $\lambda= 1+\frac{1}{p}(\beta-\alpha).$   In this case, we denote by $\widetilde{\mathbf{H}}_{\theta_1, \theta_2}^{\alpha, \beta}$ the operator instead of $\mathbf{H}_{\theta_1, \theta_2, \lambda}^{\alpha, \beta}$.  That is
$$\widetilde{\mathbf{H}}_{\theta_1, \theta_2}^{\alpha, \beta}(f)(n):=n^{\frac{1}{p}[(\theta_2-1)+\beta\theta_2]}\int_{\mathbb{R}_{+}}\frac{x^{\frac{1}{q}[(\theta_1-1)+\alpha\theta_1]}f(x)}{x^{\alpha \theta_1}(x^{\theta_1}+n^{\theta_2})^{1+\frac{1}{p}(\beta-\alpha)}}dx,\, f\in  \mathcal{M}(\mathbb{R}_{+}),\; n\in \mathbb{N}.$$

We denote by  $\|\widetilde{\mathbf{H}}_{\theta_1, \theta_2}^{\alpha, \beta}\|$ the norm of $\widetilde{\mathbf{H}}_{\theta_1, \theta_2}^{\alpha, \beta}$. We will show that
\begin{theorem}\label{m-1-2}
Let $p>1$, $0<\theta_1, \theta_2 \leq 1$, $-1<\alpha, \beta <p-1$, and $\widetilde{\mathbf{H}}_{\theta_1, \theta_2}^{\alpha, \beta}$ be defined as above.  Then  $\widetilde{\mathbf{H}}_{\theta_1, \theta_2}^{\alpha, \beta}$ is bounded from  $L^p(\mathbb{R}_{+})$ into $l^p$ and
\begin{equation}\label{norm}\|\widetilde{\mathbf{H}}_{\theta_1, \theta_2}^{\alpha, \beta}\|= \frac{1}{\theta_2^{\frac{1}{p}}\theta_1^{\frac{1}{q}}}B(\frac{1+\beta}{p}, \frac{p-1-\alpha}{p}).\end{equation}  \end{theorem}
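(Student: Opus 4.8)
The plan is to establish the two halves of the statement —- the upper bound $\|\widetilde{\mathbf{H}}_{\theta_1,\theta_2}^{\alpha,\beta}\|\le \theta_2^{-1/p}\theta_1^{-1/q}B(\tfrac{1+\beta}{p},\tfrac{p-1-\alpha}{p})$ and the matching lower bound —- separately, with the upper bound done by a weighted Schur/Hölder estimate and the lower bound by testing on a near-extremal family.

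For the upper bound, I would apply Hölder's inequality inside the integral defining $\widetilde{\mathbf{H}}_{\theta_1,\theta_2}^{\alpha,\beta}(f)(n)$, splitting the kernel $k(x,n):=\dfrac{x^{\frac{1}{q}[(\theta_1-1)+\alpha\theta_1]-\alpha\theta_1}}{(x^{\theta_1}+n^{\theta_2})^{1+\frac1p(\beta-\alpha)}}$ as $k(x,n)^{1/q}\cdot k(x,n)^{1/p}$ and inserting a balancing power of $x/n$ so that the ``dx-side'' integral $\int_{\mathbb{R}_+}k(x,n)(x/n^{\theta_2/\theta_1})^{r}\,dx$ is independent of $n$ after the substitution $t=x^{\theta_1}/n^{\theta_2}$, and similarly the ``sum-side'' quantity $\sum_{n\ge1}$-analog (really an integral comparison $\int_{\mathbb{R}_+}k(x,n)(\dots)\,dn$) is controlled. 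The correct choice of the interior weight will make both one-variable integrals evaluate, via $t=x^{\theta_1}/n^{\theta_2}$ and the standard formula $\int_0^\infty \frac{t^{s-1}}{(1+t)^{a}}\,dt=B(s,a-s)$, to exactly $\theta_2^{-1}\theta_1^{-1}$ times appropriate Beta-function factors whose product is $\big[\theta_2^{-1/p}\theta_1^{-1/q}B(\tfrac{1+\beta}{p},\tfrac{p-1-\alpha}{p})\big]^{p}$ after raising to the $p$-th power and summing; here one also replaces the sum over $n\in\mathbb N$ by an integral over $\mathbb{R}_+$, which is legitimate because for each fixed $x$ the function $n\mapsto k(x,n)n^{(\dots)}$ is monotone (decreasing) on $\mathbb{R}_+$, so $\sum_{n\ge1}\le\int_0^\infty$. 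Putting these together by Fubini gives $\|\widetilde{\mathbf{H}}_{\theta_1,\theta_2}^{\alpha,\beta}f\|_p\le \theta_2^{-1/p}\theta_1^{-1/q}B(\tfrac{1+\beta}{p},\tfrac{p-1-\alpha}{p})\|f\|_p$.

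For the lower bound, I would use the standard trick of testing on $f_\varepsilon(x)=x^{-(1+\alpha)/p-\varepsilon/p}\mathbf 1_{(1,\infty)}(x)$ (or its reciprocal-exponent variant on $(0,1)$, whichever makes $f_\varepsilon\in L^p$), compute $\|f_\varepsilon\|_p^p=1/\varepsilon$ up to the obvious constant, and then bound $\|\widetilde{\mathbf{H}}_{\theta_1,\theta_2}^{\alpha,\beta}f_\varepsilon\|_p$ from below by estimating $\widetilde{\mathbf{H}}_{\theta_1,\theta_2}^{\alpha,\beta}f_\varepsilon(n)$ from below (extending the $x$-integral to all of $\mathbb{R}_+$ only costs an $o(1)$ error as $\varepsilon\to0$, or one keeps the truncation and shows the missing piece is $O(1)$ while the main term is $\asymp \varepsilon^{-1}$), again via $t=x^{\theta_1}/n^{\theta_2}$, so that $\widetilde{\mathbf{H}}_{\theta_1,\theta_2}^{\alpha,\beta}f_\varepsilon(n)\gtrsim n^{-(1+\beta)/p-\varepsilon\theta_2/(p\theta_1)}\cdot\theta_1^{-1}B(\dots)$, then comparing $\sum_{n\ge1}n^{-1-\varepsilon\theta_2/\theta_1}$ with $\int_1^\infty$ to get a factor $\asymp (\varepsilon\theta_2/\theta_1)^{-1}$. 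Taking the ratio $\|\widetilde{\mathbf{H}}_{\theta_1,\theta_2}^{\alpha,\beta}f_\varepsilon\|_p/\|f_\varepsilon\|_p$ and letting $\varepsilon\to0^+$, the $\theta$- and Beta-factors assemble to the claimed constant, giving $\|\widetilde{\mathbf{H}}_{\theta_1,\theta_2}^{\alpha,\beta}\|\ge\theta_2^{-1/p}\theta_1^{-1/q}B(\tfrac{1+\beta}{p},\tfrac{p-1-\alpha}{p})$.

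The main obstacle I anticipate is the sum-versus-integral comparison: on the upper-bound side one needs $\sum_{n\ge1}(\cdot)\le\int_0^\infty(\cdot)\,dn$ with \emph{no extra constant}, which requires genuine monotonicity of the relevant $n$-function (guaranteed by $0<\theta_2\le1$ and the sign constraints on $\alpha,\beta$, which is presumably exactly why these hypotheses are imposed), while on the lower-bound side one needs the reverse comparison $\sum_{n\ge1}n^{-1-\delta}\ge\int_1^\infty n^{-1-\delta}\,dn=\delta^{-1}$ together with a careful check that the truncation of $f_\varepsilon$ and the discreteness of $n$ contribute only lower-order terms as $\varepsilon\to0$. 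Getting these two comparisons to pinch to the same constant, rather than to constants differing by a harmless but sharpness-destroying factor, is the delicate point; everything else reduces to the Beta-integral evaluation and bookkeeping of exponents.
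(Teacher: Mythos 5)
Your overall strategy coincides with the paper's: the upper bound via a H\"older/Schur splitting of the kernel with a balancing power of $x^{\theta_1}/n^{\theta_2}$, the exact evaluation of the $dx$-integral as $\theta_1^{-1}B(\frac{1+\beta}{p},\frac{p-1-\alpha}{p})$, the domination of the $n$-sum by the corresponding integral over $(0,\infty)$ using monotonicity (which is indeed where $0<\theta_2\le 1$ and $\beta<p-1$ enter), and a lower bound obtained by testing on a truncated power function and letting the regularization parameter tend to $0$. The upper-bound half of your outline is sound and is essentially identical to the paper's argument.

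There is, however, a genuine error in the lower bound: the test function is wrong. You take $f_\varepsilon(x)=x^{-(1+\alpha)/p-\varepsilon/p}\mathbf{1}_{(1,\infty)}(x)$ and assert $\|f_\varepsilon\|_p^p\asymp 1/\varepsilon$; in fact $\|f_\varepsilon\|_p^p=\int_1^\infty x^{-1-\alpha-\varepsilon}\,dx=\frac{1}{\alpha+\varepsilon}$, which is $+\infty$ when $\alpha<0$ and $\varepsilon$ is small, and which stays \emph{bounded} as $\varepsilon\to0^{+}$ when $\alpha>0$; the reciprocal variant on $(0,1)$ has the same defect with signs reversed. The source of the mistake is that the weight $x^{-\alpha\theta_1/p}$ (more precisely the full $x$-power $x^{\frac{1}{q}[(\theta_1-1)+\alpha\theta_1]-\alpha\theta_1}$) is built into the kernel, not into the norm: the exponent $-(1+\alpha)/p-\varepsilon/p$ is the near-extremal for the \emph{weighted} space $L^p(x^{\alpha}dx)$, whereas here one needs the near-extremal for unweighted $L^p(\mathbb{R}_{+})$, namely $\widetilde f(x)=\varepsilon^{1/p}x^{-\frac{1+\theta_1\varepsilon}{p}}\mathbf{1}_{[1,\infty)}(x)$ (the paper's choice), for which $\|\widetilde f\|_p^p=\theta_1^{-1}$ and the substitutions $s=x^{\theta_1}$, $t=s/n^{\theta_2}$ produce exactly the integrand $t^{-\frac{1+\alpha+\varepsilon}{p}}(1+t)^{-1-\frac{1}{p}(\beta-\alpha)}$, whose integral over $(0,\infty)$ is $B(\frac{1+\beta+\varepsilon}{p},\frac{p-1-\alpha-\varepsilon}{p})\to B(\frac{1+\beta}{p},\frac{p-1-\alpha}{p})$. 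With your exponent the $t$-integral, where it converges at all, tends to a Beta value at shifted arguments, and the ratio $\|\widetilde{\mathbf{H}}_{\theta_1,\theta_2}^{\alpha,\beta}f_\varepsilon\|_p/\|f_\varepsilon\|_p$ does not approach the claimed constant, so the sharpness argument collapses. Once the test function is corrected, the remainder of your outline (truncation error $O(1)$ against a main term $\asymp\varepsilon^{-1}$, and $\varepsilon\sum_n n^{-1-\varepsilon\theta_2}\to\theta_2^{-1}$) is exactly what the paper carries out, the only technical point you do not mention being the use of Bernoulli's inequality to bound $[L(\varepsilon)-U(n)]^p$ from below before summing.
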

Here $B(\cdot, \cdot)$ is the well-known Beta function, which is defined as
$$B(u,v)=\int_{0}^{\infty}\frac{t^{u-1}}{(1+t)^{u+v}}\,dt,\: u>0,v>0.$$
It is known that
$$B(u,v)=\int_{0}^{1}t^{u-1}(1-t)^{v-1}\,dt=\frac{\Gamma(u)\Gamma{(v)}}{\Gamma(u+v)},$$
where $\Gamma(\cdot) $ is the Gamma function, defined as
$$\Gamma(x)=\int_{0}^{\infty}e^{-t} t^{x-1}\,dt,\: x>0.$$
For more infromations to these special functions, see \cite{AAR}.

When $\theta_1=\theta_2=1$,  we see from Theorem \ref{m-1-1} that
${\mathbf{H}}_{1, 1, \lambda}^{\alpha, \beta}$ is not bounded from  $L^p(\mathbb{R}_{+})$ into $l^p$ if $\lambda<1+\frac{1}{p}(\beta-\alpha)$.  On the other hand, we notice that, for $\lambda>0$,
$$\int_{[0, 1)}t^{x+n-1}(1-t)^{\lambda-1}dt=\frac{\Gamma(x+n)\Gamma(\lambda)}{\Gamma(x+n+\lambda)}, x>0, n\geq 1 .$$

Then, by using the fact
\begin{equation}\label{g}
\Gamma(x) = \sqrt{2\pi} x^{x-\frac{1}{2}}e^{-x}[1+r(x)],\, |r(x)|\leq e^{\frac{1}{12x}}-1,\, x>0,\end{equation}
we see that
$$\int_{[0, 1)}t^{x+n-1}(1-t)^{\lambda-1}dt \asymp  \frac{1}{(x+n)^{\lambda}}.$$

Hence, in order for ${\mathbf{H}}_{1,1,\lambda}^{\alpha, \beta}: L^p(\mathbb{R}_{+}) \rightarrow l^p$ to be bounded  when $\lambda<1+\frac{1}{p}(\beta-\alpha)$,  let $\mu$ be a positive Borel measure on $[0, 1)$, we will consider the operator
$\widehat{\mathbf{H}}^{\alpha, \beta}_{\lambda, \mu}$, which is defined as
$$\widehat{\mathbf{H}}^{\alpha, \beta}_{\lambda, \mu}(f)(n):=n^{\frac{\beta}{p}}\int_{\mathbb{R}_{+}}x^{-\frac{\alpha}{p}}\mu_{\lambda}[x+n]f(x)dx,\, f\in  \mathcal{M}(\mathbb{R}_{+}), \, n\in \mathbb{N}.$$
Where
\begin{equation}\label{mu}\mu_{\lambda}[z]:=\int_{[0, 1)}t^{z-1}(1-t)^{\lambda-1}d\mu(t), \, z, \lambda >0.\end{equation}

We then study the problem of characterizing measures $\mu$ such that $\widehat{\mathbf{H}}^{\alpha, \beta}_{\lambda, \mu} : L^p(\mathbb{R}_{+})\rightarrow l^p$ is bounded. We provide a sufficient and necessary condition of $\mu$ for which $\widehat{\mathbf{H}}^{\alpha, \beta}_{\lambda, \mu}$ is bounded from $L^p(\mathbb{R}_{+})$ into $l^p$.  We will prove the following
\begin{theorem}\label{m-1-3}
Let $p>1, \lambda>0$,  $-1<\alpha, \beta<p-1$.  Let $\mu$ be a positive Borel measure on $[0, 1)$ such that $d\nu(t):=(1-t)^{\lambda-1}d\mu(t)$ is a finite measure on $[0, 1)$, and  $\widehat{\mathbf{H}}^{\alpha, \beta}_{\lambda, \mu}$ be defined as above. Then $\widehat{\mathbf{H}}^{\alpha, \beta}_{\lambda, \mu} : L^p(\mathbb{R}_{+})\rightarrow l^p$ is bounded if and only if
$\nu$ is a $[1+\frac{1}{p}(\beta-\alpha)]$-Carleson measure on $[0, 1)$.
\end{theorem}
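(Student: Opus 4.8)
Throughout write $T:=\widehat{\mathbf{H}}^{\alpha, \beta}_{\lambda, \mu}$, put $s_{0}:=1+\frac1p(\beta-\alpha)$, and recall (with $d\nu(t)=(1-t)^{\lambda-1}d\mu(t)$) that the kernel of $T$ is $\mu_{\lambda}[x+n]=\int_{[0,1)}t^{x+n-1}\,d\nu(t)$ weighted by $n^{\beta/p}$ and $x^{-\alpha/p}$, and that $\nu$ is an $s$-Carleson measure on $[0,1)$ means $\nu([1-h,1))\le Ch^{s}$ for all $h\in(0,1)$. The plan is to convert the Carleson hypothesis into a pointwise bound on this kernel and then to read off boundedness (and its failure) from Theorem \ref{m-1-1}, used at the endpoint $\lambda=s_{0}$ of its range; note that for $\theta_{1}=\theta_{2}=1$ that theorem says precisely that $g\mapsto n^{\beta/p}\int_{\mathbb{R}_{+}}x^{-\alpha/p}g(x)(x+n)^{-\lambda}\,dx={\mathbf{H}}^{\alpha,\beta}_{1,1,\lambda}(g)(n)$ is bounded $L^{p}(\mathbb{R}_{+})\to l^{p}$ if and only if $\lambda\ge s_{0}$.

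The key elementary fact is: a finite positive measure $\nu$ on $[0,1)$ is $s$-Carleson if and only if $\int_{[0,1)}t^{z-1}\,d\nu(t)\le C z^{-s}$ for all $z\ge 1$. For the implication needed in the sufficiency part (``Carleson $\Rightarrow$ decay''), split $[0,1)=[1-1/z,1)\cup[0,1-1/z)$: on the first piece $t^{z-1}\le 1$, so its contribution is $\le\nu([1-1/z,1))\le C z^{-s}$; on the second, decompose dyadically into the sets $\{1-2^{j+1}/z\le t<1-2^{j}/z\}$, on which $t^{z-1}\le e^{-c2^{j}}$ while $\nu$ of the piece is $\le C(2^{j+1}/z)^{s}$, and sum the doubly-exponentially convergent series; finiteness of $\nu$ disposes of the ranges $1\le z<2$ and of the leftover set near $t=0$. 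Granting this, if $\nu$ is $s_{0}$-Carleson then $\mu_{\lambda}[x+n]\le C(x+n)^{-s_{0}}$, hence
\[
\bigl|T(f)(n)\bigr|\;\le\;C\,n^{\frac{\beta}{p}}\int_{\mathbb{R}_{+}}\frac{x^{-\frac{\alpha}{p}}\,|f(x)|}{(x+n)^{s_{0}}}\,dx\;=\;C\,{\mathbf{H}}^{\alpha,\beta}_{1,1,s_{0}}(|f|)(n),
\]
and since $\lambda=s_{0}$ meets the hypothesis of Theorem \ref{m-1-1} with equality, the right-hand operator is bounded $L^{p}(\mathbb{R}_{+})\to l^{p}$; therefore so is $T$.

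For necessity I argue directly with test functions rather than through the converse of the fact above. Fix small $h\in(0,\tfrac12)$, set $N=\lceil 1/h\rceil$, and take $f_{h}(x)=x^{-\alpha/p}\chi_{(N,2N)}(x)\in L^{p}(\mathbb{R}_{+})$, for which $\|f_{h}\|_{p}^{p}\asymp N^{1-\alpha}$ uniformly in $h$ — the support is deliberately kept away from $0$, since $x^{-\alpha/p}$ need not be $p$-integrable near $0$ when $\alpha\ge 1$. For $n\in(N,2N)\cap\mathbb{N}$ and $x\in(N,2N)$ one has $x+n-1<8/h$, so $\mu_{\lambda}[x+n]\ge\int_{[1-h,1)}t^{x+n-1}\,d\nu(t)\ge(1-h)^{8/h}\,\nu([1-h,1))\gtrsim\nu([1-h,1))$, while $x^{-2\alpha/p}\gtrsim N^{-2\alpha/p}$ and $n^{\beta/p}\gtrsim N^{\beta/p}$ on that range; integrating over an interval of length $N$ gives $T(f_{h})(n)\gtrsim N^{1+(\beta-2\alpha)/p}\,\nu([1-h,1))$ for all such $n$. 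Summing $p$-th powers over the $\asymp N$ admissible $n$ and comparing with $\|T\|^{p}\|f_{h}\|_{p}^{p}\asymp\|T\|^{p}N^{1-\alpha}$ yields $\nu([1-h,1))\lesssim N^{-s_{0}}\asymp h^{s_{0}}$; the range $h\in[\tfrac12,1)$ is trivial because $\nu$ is finite. Hence $\nu$ is $s_{0}$-Carleson, and the proof is complete.

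I do not expect a single hard step here; the real content is the recognition that the whole problem decouples into (i) the equivalence between the $s_{0}$-Carleson condition on $\nu$ and the kernel bound $\mu_{\lambda}[x+n]\lesssim(x+n)^{-s_{0}}$, and (ii) the fact that this bound feeds exactly into the \emph{borderline} exponent $\lambda=s_{0}$ of Theorem \ref{m-1-1}, so that the earlier theorem is being invoked precisely at the boundary of its range of validity. The fiddly part is the uniform bookkeeping of the weights $x^{-\alpha/p}$ and $n^{\beta/p}$ over the full parameter range $-1<\alpha,\beta<p-1$ in the test-function estimate — in particular choosing the support of $f_{h}$ and the auxiliary windows for $n$ so that every comparison holds with constants independent of $h$ (including the degenerate case $\alpha=1$).
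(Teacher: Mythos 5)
Your proof is correct, but it diverges from the paper's argument in both halves, most substantially in the necessity direction. For sufficiency, the paper isolates the same kernel bound $\mu_{\lambda}[x+n]\preceq (x+n)^{-[1+\frac{1}{p}(\beta-\alpha)]}$ as its Lemma 4.1, but proves it by integration by parts, $\mu_{\lambda}[x+n]=(x+n-1)\int_{0}^{1}t^{x+n-2}\nu([t,1))\,dt$, followed by the Beta-function identity and Stirling's formula for $\Gamma$; your dyadic decomposition of $[0,1)$ into the annuli $\{1-2^{j+1}/z\le t<1-2^{j}/z\}$ reaches the same estimate more elementarily and avoids the asymptotics of $\Gamma$ entirely. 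Both proofs then feed the bound into the borderline case $\lambda=1+\frac{1}{p}(\beta-\alpha)$ (the paper cites its Theorem 1.2, you cite Theorem 1.1 at the endpoint — these coincide). For necessity the paper tests against the globally supported functions $f_{w}(x)=(1-w^{2})^{1/p}w^{2(k-1)/p}$ on $(k,k+1]$, pairs with a dual sequence $g_{w}\in l^{q}$, and invokes the power-series estimate $\sum_{n}n^{c-1}w^{2n}\asymp(1-w^{2})^{-c}$ from Zhu's book; you instead use the localized box function $x^{-\alpha/p}\chi_{(N,2N)}$ with $N\asymp 1/h$ and sum directly over $n\in(N,2N)$, which requires no duality and no series asymptotics, only the uniform lower bound $(1-h)^{8/h}\gtrsim 1$. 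Your route is the more self-contained one; the paper's choice of $f_{w}$ has the advantage that the same family is reused verbatim (as a weakly null sequence) in the compactness Theorem 1.4, whereas your box functions would need the analogous weak-nullity observation if one wanted to extend them to that setting. The exponent bookkeeping in your test-function computation ($1+p+\beta-2\alpha$ against $1-\alpha$, leaving $p[1+\frac{1}{p}(\beta-\alpha)]$) is correct, including the case $\alpha=1$.
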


Here,  for $s>0$, a positive Borel measure $\mu$ on $[0,1)$, we say $\mu$ is an $s$-Carleson measure if there is a constant $C>0$ such that $$\mu([t, 1))\leq C (1-t)^s$$ holds for all $t\in [0, 1)$.

We also characterize measures $\mu$ such that $\widehat{\mathbf{H}}^{\alpha, \beta}_{\lambda, \mu} : L^p(\mathbb{R}_{+})\rightarrow l^p$ is compact. We shall show that
 \begin{theorem}\label{m-1-4}
Let $p>1, \lambda>0$,  $-1<\alpha, \beta<p-1$.  Let $\mu$ be a positive Borel measure on $[0, 1)$ such that $d\nu(t):=(1-t)^{\lambda-1}d\mu(t)$ is a finite measure on $[0, 1)$,  and $\widehat{\mathbf{H}}^{\alpha, \beta}_{\lambda, \mu}$ be defined as above. Then $\widehat{\mathbf{H}}^{\alpha, \beta}_{\lambda, \mu} : L^p(\mathbb{R}_{+})\rightarrow l^p$ is compact if and only if
$\nu$ is a vanishing $[1+\frac{1}{p}(\beta-\alpha)]$-Carleson measure on $[0, 1)$.
\end{theorem}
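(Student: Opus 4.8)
The plan is to reduce the compactness statement of Theorem~\ref{m-1-4} to the boundedness statement of Theorem~\ref{m-1-3} by a standard limiting/decomposition argument. First I would recall that $\widehat{\mathbf{H}}^{\alpha, \beta}_{\lambda, \mu}$ depends on $\mu$ only through $\nu$, since $\mu_\lambda[z]=\int_{[0,1)}t^{z-1}\,d\nu(t)$; thus for any $r\in(0,1)$ we may split $\nu=\nu_r+\nu^r$, where $\nu_r$ is the restriction of $\nu$ to $[0,r)$ and $\nu^r$ the restriction to $[r,1)$, giving a corresponding splitting $\widehat{\mathbf{H}}^{\alpha, \beta}_{\lambda, \mu}=T_r+S_r$ of the operator into the pieces coming from $\nu_r$ and $\nu^r$. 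The key observation is that $T_r$ (the piece from the compactly-supported-away-from-$1$ measure $\nu_r$) is always compact: its kernel $n^{\beta/p}x^{-\alpha/p}\int_{[0,r)}t^{x+n-1}\,d\nu(t)$ decays geometrically in $x+n$ because $t^{x+n-1}\le r^{x+n-1}$ on $[0,r)$, so $T_r$ can be approximated in operator norm by finite-rank operators (e.g. truncating the $n$-sum and approximating the $x$-integral), hence is compact. This makes $T_r$ a negligible perturbation.

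For the sufficiency direction (vanishing Carleson $\Rightarrow$ compact), I would argue that if $\nu$ is a vanishing $s$-Carleson measure with $s=1+\frac1p(\beta-\alpha)$, then $\nu^r$ has $s$-Carleson constant tending to $0$ as $r\to1^-$; applying the quantitative form of Theorem~\ref{m-1-3} — namely that $\|\widehat{\mathbf{H}}^{\alpha, \beta}_{\lambda, \mu}\|$ is comparable to the $s$-Carleson constant of $\nu$, which one reads off from the proof of Theorem~\ref{m-1-3} — gives $\|S_r\|\to 0$. Since $\widehat{\mathbf{H}}^{\alpha, \beta}_{\lambda, \mu}=T_r+S_r$ with $T_r$ compact and $\|S_r\|\to0$, the operator $\widehat{\mathbf{H}}^{\alpha, \beta}_{\lambda, \mu}$ is a norm-limit of compact operators, hence compact.

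For the necessity direction (compact $\Rightarrow$ vanishing Carleson), I would use the explicit near-extremal test functions already constructed in the proof of Theorem~\ref{m-1-3}. Concretely, for $a\in[0,1)$ near $1$, one takes the family $f_a(x)= x^{(\alpha-p+1)/p+\varepsilon}\mathbf{1}_{(0, \text{something})}$ or, more convenient here, a normalized family $f_a$ supported near a scale dictated by $1-a$, such that $\|f_a\|_p\asymp 1$ and
\begin{equation*}
\|\widehat{\mathbf{H}}^{\alpha, \beta}_{\lambda, \mu}f_a\|_p \succeq \frac{\nu([a,1))}{(1-a)^{s}}.
\end{equation*}
The point is that $f_a\rightharpoonup 0$ weakly in $L^p(\mathbb{R}_+)$ as $a\to1^-$ (they escape to a degenerating scale), so compactness of $\widehat{\mathbf{H}}^{\alpha, \beta}_{\lambda, \mu}$ forces $\|\widehat{\mathbf{H}}^{\alpha, \beta}_{\lambda, \mu}f_a\|_p\to0$, whence $\nu([a,1))=o((1-a)^s)$, i.e. $\nu$ is a vanishing $s$-Carleson measure. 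I expect the main obstacle to be the necessity direction: one must verify that the test functions from Theorem~\ref{m-1-3} can be arranged to converge weakly to zero (choosing the right supports/scalings and checking $\int f_a g\to0$ for $g$ in a dense subclass of $L^q$), and one must track the lower bound on $\|\widehat{\mathbf{H}}^{\alpha, \beta}_{\lambda, \mu}f_a\|_p$ in terms of $\nu([a,1))$ with enough care — using $\mu_\lambda[x+n]\ge \int_{[a,1)}t^{x+n-1}\,d\nu(t)\ge a^{x+n-1}\nu([a,1))$ and estimating the resulting integral/sum — uniformly as $a\to1^-$.
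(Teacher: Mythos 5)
Your proposal is correct, and the necessity direction is essentially the paper's argument: the paper uses exactly the step functions $f_w$ from (\ref{def}), notes they tend weakly to $0$, and extracts the lower bound $\nu([w,1))\preceq \|\widehat{\mathbf{H}}^{\alpha,\beta}_{\lambda,\mu}f_w\|_p\,(1-w^2)^{1+\frac{1}{p}(\beta-\alpha)}$ by the same computation as in the ``only if'' part of Theorem \ref{m-1-3}, so your stated obstacles (weak nullity of the test family and the quantitative lower bound) are exactly the points the paper checks. The sufficiency direction, however, is where you genuinely diverge. The paper approximates $\widehat{\mathbf{H}}^{\alpha,\beta}_{\lambda,\mu}$ by the finite-rank truncations $\mathbf{H}^{[\mathfrak{N}]}$ obtained by killing all output coordinates $n>\mathfrak{N}$, and controls the remainder using the vanishing estimate (\ref{mun-2}) of Lemma \ref{lem}, i.e.\ $\mu_{\lambda}[x+n]=o\bigl((x+n)^{-1-\frac{1}{p}(\beta-\alpha)}\bigr)$ uniformly in $x$ as $n\to\infty$, together with the bounded model operator of Theorem \ref{m-1-2}. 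You instead truncate the \emph{measure}, writing $\nu=\nu_r+\nu^r$, proving compactness of the piece supported on $[0,r)$ via geometric decay of $t^{x+n-1}\le r^{x+n-1}$ (which does give a norm-convergent sum of rank-one coordinate functionals, since $x^{-\alpha/p}r^{qx}\in L^1$ for $\alpha<p-1$), and making the piece supported on $[r,1)$ small in operator norm via the quantitative form of the upper bound in Theorem \ref{m-1-3}; here you should note that Lemma \ref{lem} controls $\mu_\lambda[x+n]$ by the Carleson constant of $\nu$ for $n\ge 2$ but by the total mass $\nu([0,1))$ when $n=1$, $x\in(0,1)$ --- both of which are dominated by the Carleson constant, so $\|S_r\|\preceq N(\nu^r)\to 0$ indeed holds. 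Your route buys something: it avoids having to establish the uniform-in-$x$ little-$o$ estimate (\ref{mun-2}), which the paper only asserts ``by minor modifications'' of the proof of (\ref{mun-1}); the price is that you must extract the dependence of the operator norm on the Carleson constant from the proof of Theorem \ref{m-1-3}, which is routine but not stated there as a theorem.
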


Here, an $s$-Carleson measure $\mu$ on $[0, 1)$  is  said to be a vanishing $s$-Carleson measure, if  it satisfies further that
$$\lim_{t\rightarrow 1^{-}}\frac{\mu([t, 1))}{(1-t)^s}=0.$$

The paper is organized as follows. We will first prove Theorem \ref{m-1-2} in the next section. The proof of Theorem \ref{m-1-1} will be give in Section 3.  We prove Theorem \ref{m-1-3} and \ref{m-1-4} in Section 4.  Final remarks will be present in Section 5.


\section{Proof of Theorem \ref{m-1-2}}
For $p>1$, $0<\theta_1, \theta_2 \leq 1$, $-1<\alpha, \beta <p-1$, we set
$$w_1(n):=\int_{0}^{\infty}\frac{x^{\theta_1-1}}{(x^{\theta_1}+n^{\theta_2})^{1+\frac{1}{p}(\beta-\alpha)}} \cdot \frac{n^{\frac{\theta_2(1+\beta)}{p}}}{x^{\frac{\theta_1(1+\alpha)}{p}}}\,dx, \,\, n\in \mathbb{N};$$
$$w_2(x):=\sum_{n=1}^{\infty}\frac{n^{\theta_2-1}}{(x^{\theta_1}+n^{\theta_2})^{1+\frac{1}{p}(\beta-\alpha)}} \cdot \frac{x^{\frac{\theta_1(p-1-\alpha)}{p}}}{n^{\frac{\theta_2(p-1-\beta)}{p}}}, \,\, x>0.$$

Then, by a change of variables, we have
\begin{eqnarray}\label{w-1-1}w_1(n)&=& \int_{0}^{\infty}\frac{s^{-\frac{1+\alpha}{p}}}{(s+n^{\theta_2})^{1+\frac{1}{p}(\beta-\alpha)}} \cdot n^{\frac{\theta_2(1+\beta)}{p}}\,dx  \nonumber \\
&= &\frac{1}{\theta_1} \int_{0}^{\infty}\frac{t^{-\frac{1+\alpha}{p}}}{(1+t)^{1+\frac{1}{p}(\beta-\alpha)}}\,dt=\frac{1}{\theta_1}  B(\frac{1+\beta}{p}, \frac{p-1-\alpha}{p}).
\end{eqnarray}

On the other hand, we have
\begin{eqnarray}\label{w-2-2}w_2(x)&\leq &\int_{0}^{\infty}\frac{u^{\theta_2-1}}{(x^{\theta_1}+u^{\theta_2})^{1+\frac{1}{p}(\beta-\alpha)}} \cdot \frac{x^{\frac{\theta_1(p-1-\alpha)}{p}}}{u^{\frac{\theta_2(p-1-\beta)}{p}}}\,du \nonumber \\
&= &\frac{1}{\theta_2} \int_{0}^{\infty}\frac{t^{-\frac{p-1-\beta}{p}}}{(1+t)^{1+\frac{1}{p}(\beta-\alpha)}}\,dt=\frac{1}{\theta_2}  B(\frac{1+\beta}{p}, \frac{p-1-\alpha}{p}).
\end{eqnarray}
Here, we have used the change of variables $t=u^{\theta_2}/x^{\theta_1}$.

We start to prove Theorem \ref{m-1-2}.  For $f \in L^p(\mathbb{R}_{+}), n\in \mathbb{N}$,  we write
\begin{eqnarray}\lefteqn{n^{\frac{1}{p}[(\theta_2-1)+\beta\theta_2]}\left|\int_{\mathbb{R}_{+}}\frac{x^{\frac{1}{q}[(\theta_1-1)+\alpha\theta_1]}f(x)}{x^{\alpha \theta_1}(x^{\theta_1}+n^{\theta_2})^{1+\frac{1}{p}(\beta-\alpha)}}dx\right|}\nonumber  \\ &&\leq n^{\frac{1}{p}[(\theta_2-1)+\beta\theta_2]}\int_{\mathbb{R}_{+}}\frac{x^{\frac{1}{q}[(\theta_1-1)+\alpha\theta_1]}|f(x)|}{x^{\alpha \theta_1}(x^{\theta_1}+n^{\theta_2})^{1+\frac{1}{p}(\beta-\alpha)}}dx\nonumber \\ &&=\int_{\mathbb{R}_{+}} \left\{[V(x,n)]^{\frac{1}{p}}E_1(x, n)\cdot [V(x,n)]^{\frac{1}{q}}E_2(x, n)\right\}\,dx:=I(n), \nonumber \end{eqnarray}
where
\begin{equation*}
{V}(x, n)=\frac{1}{(x^{\theta_1}+n^{\theta_2})^{1+\frac{1}{p}(\beta-\alpha)}};
\end{equation*}

\begin{equation*}
{E}_1(x, n)=
\frac{x^{\frac{\theta_1(1+\alpha)}{pq}-\frac{\theta_1 \alpha}{p}}}
{n^{\frac{\theta_2(p-1-\beta)}{p^2}}}
\cdot
n^{\frac{1}{p}(\theta_2-1)}\cdot |f(x)|;
\end{equation*}

\begin{equation*}
{E}_2(x, n)=
\frac{n^{\frac{\theta_2(p-1-\beta)}{p^2}+\frac{\theta_2\beta}{p}}}
{x^{\frac{\theta_1(1+\alpha)}{pq}}}
\cdot x^{\frac{1}{q}(\theta_1-1)}.
\end{equation*}

Applying the H\"{o}lder's inequality on $I(n)$, we get from (\ref{w-1-1}) that
\begin{eqnarray}
I(n) &\leq & \left[\int_{\mathbb{R}_{+}}{V}(x, n)[{E}_1(x, n)]^p\,dx \right]^{\frac{1}{p}}\left[\int_{\mathbb{R}_{+}}{V}(x, n)[{E}_2(x, n)]^{q} \,dx\right]^{\frac{1}{q}} \nonumber \\
& =& [w_1(n)]^{\frac{1}{q}}\left[\int_{\mathbb{R}_{+}}{V}(x, n)[{E}_1(x, n)]^p\,dx \right]^{\frac{1}{p}}. \nonumber
\end{eqnarray}
It follows from  (\ref{w-2-2}) that
\begin{eqnarray}
\lefteqn{\|\widetilde{\mathbf{H}}_{\theta_1, \theta_2}^{\alpha, \beta}f\|_p=[\sum_{n=1}^{\infty}I^p(n) ]^{\frac{1}{p}}}  \nonumber \\
& \leq&  \frac{1}{\theta_1^{\frac{1}{q}}}[B(\frac{1+\beta}{p}, \frac{p-1-\alpha}{p})]^{\frac{1}{q}}\left[\sum_{n=1}^{\infty}\int_{\mathbb{R}_{+}}{V}(x, n)[{E}_1(x, n)]^p \,dx\right]^{\frac{1}{p}} \nonumber \\
&=& \frac{1}{\theta_1^{\frac{1}{q}}}[B(\frac{1+\beta}{p}, \frac{p-1-\alpha}{p})]^{\frac{1}{q}}\left[\int_{\mathbb{R}_{+}}w_2(x) |f(x)|^p\,dx \right]^{\frac{1}{p}} \nonumber \\
& \leq&  \frac{1}{\theta_2^{\frac{1}{p}}\theta_1^{\frac{1}{q}}}B(\frac{1+\beta}{p}, \frac{p-1-\alpha}{p})\|f\|_p. \nonumber
\end{eqnarray}

This means that $\widetilde{\mathbf{H}}_{\theta_1, \theta_2}^{\alpha, \beta}$ is bounded from $L^p(\mathbb{R}_{+})$ into $l^p$, and
\begin{equation}\label{low}\|\widetilde{\mathbf{H}}_{\theta_1, \theta_2}^{\alpha, \beta}\|\leq \frac{1}{\theta_2^{\frac{1}{p}}\theta_1^{\frac{1}{q}}}B(\frac{1+\beta}{p}, \frac{p-1-\alpha}{p}).\end{equation}

For $\varepsilon>0$, we define $\widetilde{f}(x)=0$, if $x\in (0,1)$; $\widetilde{f}(x)=\varepsilon^{\frac{1}{p}} x^{-\frac{1+\theta_1\varepsilon}{p}}$, if $x\in [1, \infty)$.  Then, we easily see that $\|\widetilde{f}\|_p^p=\theta_1^{-1}.$

We write
\begin{eqnarray}\label{shar-1}
\|\widetilde{\mathbf{H}}_{\theta_1, \theta_2}^{\alpha, \beta}\widetilde{f}\|_p^p=\varepsilon \sum_{n=1}^{\infty}n^{(1+\beta)\theta_2-1} \cdot [J(n)]^p.
\end{eqnarray}
Here $$J(n):=\int_{\mathbb{R}_{+}}\frac{x^{\frac{1}{q}[(\theta_1-1)-(q-1)\alpha\theta_1]}\cdot x^{-\frac{1+\varepsilon\theta_1}{p}}}{(x^{\theta_1}+n^{\theta_2})^{1+\frac{1}{p}(\beta-\alpha)}}\, dx .$$

On the other hand, a calculation yields that
\begin{eqnarray}\label{shar-2}
J(n)=\frac{1}{\theta_1}n^{-\frac{\theta_2}{p}(1+\beta+\varepsilon)} \int_{\frac{1}{n^{\theta_2}}}^{\infty} \frac{t^{-\frac{1+\alpha+\varepsilon}{p}}}{(1+t)^{1+\frac{1}{p}(\beta-\alpha)}}\,dt.
\end{eqnarray}

Also, when $\varepsilon< p-1-\alpha$, we have
\begin{eqnarray}\label{shar-3}
\lefteqn{ \int_{\frac{1}{n^{\theta_2}}}^{\infty} \frac{t^{-\frac{1+\alpha+\varepsilon}{p}}}{(1+t)^{1+\frac{1}{p}(\beta-\alpha)}}\,dt } \nonumber \\
&&= \int_{0}^{\infty} \frac{t^{-\frac{1+\alpha+\varepsilon}{p}}}{(1+t)^{1+\frac{1}{p}(\beta-\alpha)}}\,dt-
 \int_{0}^{\frac{1}{n^{\theta_2}}} \frac{t^{-\frac{1+\alpha+\varepsilon}{p}}}{(1+t)^{1+\frac{1}{p}(\beta-\alpha)}}\,dt  \nonumber  \\
 &&= B(\frac{1+\beta}{p}+\frac{\varepsilon}{p}, \frac{p-1-\alpha}{p}-\frac{\varepsilon}{p})- \int_{0}^{\frac{1}{n^{\theta_2}}}  \frac{t^{-\frac{1+\alpha+\varepsilon}{p}}}{(1+t)^{1+\frac{1}{p}(\beta-\alpha)}}\,dt  \nonumber
 \\&&:= L(\varepsilon)-U(n).
\end{eqnarray}

Combine (\ref{shar-1}), (\ref{shar-2}) and (\ref{shar-3}), we get that
\begin{eqnarray}\label{mo-1}
\|\widetilde{\mathbf{H}}_{\theta_1, \theta_2}^{\alpha, \beta}\widetilde{f}\|_p^p&\geq &\frac{\varepsilon}{\theta_1^p}\sum_{n=1}^{\infty}n^{-1-\varepsilon\theta_2}\cdot [L(\varepsilon)-U(n)]^p.
\end{eqnarray}
By using the Bernoulli's inequality(see \cite{M}), we obtain that
\begin{eqnarray}\label{mo-2}
[L(\varepsilon)-U(n)]^p \geq  [L(\varepsilon)]^p \left [1-\frac{p}{L(\varepsilon)} \int_{0}^{\frac{1}{n^{\theta_2}}}  \frac{t^{-\frac{1+\alpha+\varepsilon}{p}}}{(1+t)^{1+\frac{1}{p}(\beta-\alpha)}}\,dt\right].
\end{eqnarray}

We also note that
\begin{eqnarray}\label{mo-3}
\varepsilon \sum_{n=1}^{\infty}n^{-1-\varepsilon\theta_2}=\frac{1}{\theta_2}[1+o(1)], \,\, \,\, \varepsilon \rightarrow 0^{+},
\end{eqnarray}
and, for $\varepsilon<p-1-\nu$,
\begin{eqnarray}\label{mo-4}
\lefteqn{\sum_{n=1}^{\infty}n^{-1-\varepsilon\theta_2}\int_{0}^{\frac{1}{n^{\theta_2}}}  \frac{t^{-\frac{1+\alpha+\varepsilon}{p}}}{(1+t)^{1+\frac{1}{p}(\beta-\alpha)}}\,dt}\nonumber \\
&\leq& \sum_{n=1}^{\infty}n^{-1-\varepsilon\theta_2}\int_{0}^{\frac{1}{n^{\theta_2}}}  t^{-\frac{1+\alpha+\varepsilon}{p}}\,dt\nonumber  \\
&=& \frac{1}{p-1-\alpha-\varepsilon} \sum_{n=1}^{\infty}n^{-1-\theta_2(\frac{p-1-\alpha}{p}+\frac{\varepsilon}{q})} =O(1), \,\, \varepsilon \rightarrow 0^{+}.
\end{eqnarray}

It follows from (\ref{mo-1})-(\ref{mo-4}) that
\begin{eqnarray}
\|\widetilde{\mathbf{H}}_{\theta_1, \theta_2}^{\alpha, \beta}\widetilde{f}\|_p^p&\geq &\frac{1}{\theta_2\theta_1^p}[1+o(1)]\cdot [L(\varepsilon)]^p\cdot [1-\varepsilon O(1)]. \nonumber
\end{eqnarray}
Hence, we get that
\begin{eqnarray}
\|\widetilde{\mathbf{H}}_{\theta_1, \theta_2}^{\alpha, \beta}\| \geq
\frac{\|\widetilde{\mathbf{H}}_{\theta_1, \theta_2}^{\alpha, \beta}\widetilde{f}\|_p}{\|\widetilde{f}\|_p}\geq \frac{1}{\theta_2^{\frac{1}{p}}\theta_1^{\frac{1}{q}}}[1+o(1)]^{\frac{1}{p}}\cdot [L(\varepsilon)]\cdot [1-\varepsilon O(1)]^{\frac{1}{p}}. \nonumber
\end{eqnarray}

Take $\varepsilon \rightarrow 0^{+}$, we see that
\begin{equation}\label{big}
\|\widetilde{\mathbf{H}}_{\theta_1, \theta_2}^{\alpha, \beta}\|  \geq \frac{1}{\theta_2^{\frac{1}{p}}\theta_1^{\frac{1}{q}}}B(\frac{1+\beta}{p}, \frac{p-1-\alpha}{p}).\end{equation}
Combine (\ref{low}) and (\ref{big}), we see that (\ref{norm}) is true and this proves Theorem \ref{m-1-2}.

\section{Proof of Theorem \ref{m-1-1}}

We first prove the "if" part.  If $\lambda\geq 1+\frac{1}{p}(\beta-\alpha)$,  then, for $f\in \mathcal{M}(\mathbb{R}_{+}), \,  n\in \mathbb{N}$,  it is easy to see that
$$\left|\int_{\mathbb{R}_{+}}\frac{x^{\frac{1}{q}[(\theta_1-1)+\alpha\theta_1]}f(x)}{x^{\alpha \theta_1}(x^{\theta_1}+n^{\theta_2})^{\lambda}}dx\right|\leq
\int_{\mathbb{R}_{+}}\frac{x^{\frac{1}{q}[(\theta_1-1)+\alpha\theta_1]}|f(x)|}{x^{\alpha \theta_1}(x^{\theta_1}+n^{\theta_2})^{1+\frac{1}{p}(\beta-\alpha)}}dx.$$

Consequently, in view of the boundedness of $\widetilde{\mathbf{H}}_{\theta_1, \theta_2}^{\alpha, \beta}$ from $L^p(\mathbb{R}_{+})$ into $l^p$, we conclude that $\mathbf{H}_{\theta_1, \theta_2, \lambda}^{\alpha, \beta}$ is bounded from $L^p(\mathbb{R}_{+})$ into $l^p$ when $\lambda\geq 1+\frac{1}{p}(\beta-\alpha)$. The "if" part is proved.

Next, we prove the "only if" part.  We will show that, if $\lambda<1+\frac{1}{p}(\beta-\alpha)$, then  $\mathbf{H}_{\theta_1, \theta_2, \lambda}^{\alpha, \beta}$ is not bounded from $L^p(\mathbb{R}_{+})$ into $l^p$.

For $\varepsilon>0$, we take, as is done in the proof of Theorem \ref{m-1-2},  $\widetilde{f}(x)=0$, if $x\in (0,1)$; $\widetilde{f}(x)=\varepsilon^{\frac{1}{p}} x^{-\frac{1+\theta_1\varepsilon}{p}}$, if $x\in [1, \infty)$.  Then, we have $\|\widetilde{f}\|_p^p=\theta_1^{-1}.$

Hence we obtain that
\begin{eqnarray}\|\mathbf{H}_{\theta_1, \theta_2, \lambda}^{\alpha, \beta} \widetilde{f}\|_{p}^p&=&\sum_{n=1}^{\infty} n^{(\theta_2-1)+\beta\theta_2} \left[\int_{1}^{\infty}
\frac{x^{\frac{1}{q}[(\theta_1-1)+\alpha\theta_1]}\cdot x^{-\frac{1+\theta_1\varepsilon}{p}}}{x^{\alpha\theta_1}(x^{\theta_1}+n^{\theta_2})^{\lambda}}\,dx\right]^p \nonumber \\ &=&
\sum_{n=1}^{\infty} n^{(\theta_2-1)+\beta\theta_2} \left[\int_{1}^{\infty} \frac{s^{-\frac{1+\alpha+\varepsilon}{p}}}{(s+n^{\theta_2})^{\lambda}}\,ds\right]^p \nonumber  \\ &=&
\sum_{n=1}^{\infty} n^{-1-p\theta_{2}[\lambda-1-\frac{1}{p}(\beta-\alpha)+\frac{\varepsilon}{p}]}\left[ \int_{\frac{1}{n^{\theta_2}}}^{\infty} \frac{t^{-\frac{1+\alpha+\varepsilon}{p}}}{(1+t)^{\lambda}}\,dt\right]^{p} \nonumber \\
&\geq & \sum_{n=1}^{\infty} n^{-1-p\theta_{2}[\lambda-1-\frac{1}{p}(\beta-\alpha)+\frac{\varepsilon}{p}]} \left[\int_{1}^{\infty} \frac{t^{-\frac{1+\alpha+\varepsilon}{p}}}{(1+t)^{\lambda}}\,dt \right]^{p}.\nonumber
\end{eqnarray}

We suppose  $\mathbf{H}_{\theta_1, \theta_2, \lambda}^{\alpha, \beta}:  L^p(\mathbb{R}_{+}) \rightarrow l^p$ is bounded. Then there exists a constant $C_1>0$ such that
\begin{eqnarray}\label{c-1}
\quad\quad C_1&\geq&\frac{\|\mathbf{H}_{\theta_1, \theta_2, \lambda}^{\alpha, \beta} \widetilde{f}\|_{p}^p}{\|\widetilde{f}\|_p^p}\nonumber \\&\geq&\theta_1\sum_{n=1}^{\infty} n^{-1+p\theta_{2}[1-\lambda+\frac{1}{p}(\beta-\alpha)-\frac{\varepsilon}{p}]} \left[\int_{1}^{\infty} \frac{t^{-\frac{1+\alpha+\varepsilon}{p}}}{(1+t)^{\lambda}}\,dt \right]^{p}.
\end{eqnarray}

But, if $\lambda<1+\frac{1}{p}(\beta-\alpha)$, then, when $\varepsilon<p[(1-\lambda)+\frac{1}{p}(\beta-\alpha)]$, we see from $p\theta_{2}[1-\lambda+\frac{1}{p}(\beta-\alpha)-\frac{\varepsilon}{p}]:=\delta>0$ that
$$\sum_{n=1}^{\infty} n^{-1+p\theta_{2}[1-\lambda+\frac{1}{p}(\beta-\alpha)-\frac{\varepsilon}{p}]}  =\sum_{n=1}^{\infty} n^{-1+\delta}=+\infty.$$
This means that (\ref{c-1}) is a contradiction. This proves that $\mathbf{H}_{\theta_1, \theta_2, \lambda}^{\alpha, \beta}$ is not bounded from $L^p(\mathbb{R}_{+})$ into $l^p$ if $\lambda<1+\frac{1}{p}(\beta-\alpha)$. The theorem is proved.

\section{Proofs of Theorem \ref{m-1-3} and \ref{m-1-4}}
In the proofs of Theorem \ref{m-1-3} and \ref{m-1-4}, we need the following
\begin{lemma}\label{lem}
Let $\lambda>0, -1<\alpha, \beta<p-1$. Let $\mu$ be a positive Borel measure on $[0, 1)$ and $\mu_{\lambda}[z]$ be defined as in (\ref{mu}) for $z >0$. Set $d\nu(t)=(1-t)^{\lambda-1}d \mu(t)$.  If $\nu$ is a  $[1+\frac{1}{p}(\beta-\alpha)]$-Carleson measure on $[0, 1)$, then
\begin{equation}\label{mun-1}\mu_{\lambda}[x+n] \preceq \frac{1}{(x+n)^{1+\frac{1}{p}(\beta-\alpha)}}\end{equation}
holds for all $x>0, n\in \mathbb{N}$. Furthermore, if $\nu$ is a  vanishing $[1+\frac{1}{p}(\beta-\alpha)]$-Carleson measure on $[0, 1)$, then, for any $x>0$,
 \begin{equation}\label{mun-2}\mu_{\lambda}[x+n] =o\left(\frac{1}{(x+n)^{1+\frac{1}{p}(\beta-\alpha)}}\right), \quad n\rightarrow \infty.\end{equation}
\end{lemma}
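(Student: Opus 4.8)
The starting point is the elementary observation that, by the definition of $\nu$, one has $\mu_{\lambda}[x+n]=\int_{[0,1)}t^{x+n-1}\,d\nu(t)$ with $d\nu(t)=(1-t)^{\lambda-1}\,d\mu(t)$; thus everything reduces to bounding $\int_{[0,1)}t^{z-1}\,d\nu(t)$ for $z:=x+n>1$ in terms of the Carleson growth of $\nu$. Put $\tau:=1+\frac1p(\beta-\alpha)$, and note that $-1<\alpha,\beta<p-1$ forces $0<\tau<2$, so the $\tau$-Carleson condition is meaningful. Since $0^{z-1}=0$, the point mass at $0$ contributes nothing, and I would decompose $(0,1)=\bigcup_{k\ge0}J_k$ with $J_k:=(1-2^{-k},\,1-2^{-k-1}]$. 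On $J_k$ we have $t\le 1-2^{-k-1}$, hence $t^{z-1}\le\exp\!\big(-(z-1)2^{-k-1}\big)$; moreover $J_k\subset[1-2^{-k},1)$, so the hypothesis $\nu([t,1))\le C(1-t)^{\tau}$ yields $\nu(J_k)\le C\,2^{-k\tau}$. Summing over $k$ gives
\[
\mu_{\lambda}[x+n]\ \le\ C\sum_{k\ge0}\exp\!\big(-(z-1)2^{-k-1}\big)\,2^{-k\tau}.
\]

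The next step is to show this right-hand side is $\preceq z^{-\tau}$. For $1<z\le2$ it is at most $C\sum_{k\ge0}2^{-k\tau}<\infty$, which is $\asymp z^{-\tau}$ on that range. For $z>2$ I would take $m:=\lfloor\log_2 z\rfloor\ge1$ and split the sum at $k=m$: for $k\ge m$ the exponential factor is $\le1$, so that part is at most $\sum_{k\ge m}2^{-k\tau}\asymp 2^{-m\tau}\asymp z^{-\tau}$; for $k<m$ one has $z-1\ge 2^{m-1}$, so $(z-1)2^{-k-1}\ge\frac14\,2^{\,m-k}$, and writing $j=m-k$ that part is bounded by $2^{-m\tau}\sum_{j\ge1}\exp(-2^{j}/4)\,2^{j\tau}$, a convergent series times $2^{-m\tau}\asymp z^{-\tau}$. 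This proves (\ref{mun-1}). An alternative, more in the spirit of the machinery already set up, is to integrate by parts and use Fubini to get $\mu_{\lambda}[z]=(z-1)\int_0^1u^{z-2}\,\nu((u,1))\,du\le C(z-1)B(z-1,\tau+1)=C\,\Gamma(\tau+1)\,\Gamma(z)/\Gamma(z+\tau)$, and then invoke the Stirling estimate (\ref{g}) to conclude $\Gamma(z)/\Gamma(z+\tau)\preceq z^{-\tau}$; either route works.

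For the vanishing assertion (\ref{mun-2}), fix $x>0$ and $\varepsilon>0$. Being a vanishing $\tau$-Carleson measure provides $\delta\in(0,1)$ with $\nu([t,1))\le\varepsilon(1-t)^{\tau}$ for $t\in[1-\delta,1)$; let $k_0$ be the least integer with $2^{-k_0}\le\delta$. In the dyadic sum the terms with $k\ge k_0$ are handled exactly as above, but with $C$ replaced by $\varepsilon$, contributing at most $\varepsilon C\,(x+n)^{-\tau}$; the finitely many terms with $k<k_0$ are controlled by $\nu(J_k)\le\nu([0,1))<\infty$ together with $t^{x+n-1}\le\exp\!\big(-(x+n-1)\delta/2\big)$ on those sets, so their total is $\le k_0\,\nu([0,1))\exp\!\big(-(x+n-1)\delta/2\big)$, which is $o\big((x+n)^{-\tau}\big)$ as $n\to\infty$ since exponential decay beats any power. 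Hence $\limsup_{n\to\infty}(x+n)^{\tau}\mu_{\lambda}[x+n]\le\varepsilon C$, and letting $\varepsilon\to0$ gives (\ref{mun-2}).

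The only step that is not routine bookkeeping is the elementary sum estimate in the second paragraph: one has to pick the cut-off $m\approx\log_2(x+n)$ so that the exponential decay of $t^{x+n-1}$ exactly compensates the factor $2^{k\tau}$ lost to the Carleson bound. The rewriting of $\mu_{\lambda}$, the dyadic decomposition, and the finite-sum bound in the vanishing case are all straightforward once that balance is in place.
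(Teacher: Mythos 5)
Your proof is correct, but it takes a genuinely different route from the paper's. The paper integrates by parts to write $\mu_{\lambda}[x+n]=(x+n-1)\int_0^1 t^{x+n-2}\nu([t,1))\,dt$, inserts the Carleson bound to land on a Beta-function value $C\,\Gamma(\tau+1)\Gamma(x+n)/\Gamma(x+n+\tau)$ with $\tau=1+\frac1p(\beta-\alpha)$, and then invokes the Stirling-type estimate (\ref{g}) to get the decay $(x+n)^{-\tau}$; because that asymptotic is only useful for large argument, the paper has to treat the case $x\in(0,1)$, $n=1$ separately by crude finiteness of $\nu$. You instead decompose $(0,1)$ into the dyadic annuli $J_k=(1-2^{-k},1-2^{-k-1}]$, use $t^{z-1}\le\exp(-(z-1)2^{-k-1})$ together with $\nu(J_k)\preceq 2^{-k\tau}$, and balance the exponential decay against the geometric growth by cutting the sum at $k\approx\log_2 z$; this is entirely elementary (no Gamma-function asymptotics), handles all $z=x+n>1$ uniformly without the extra case split, and — a real advantage — yields a fully written-out proof of the vanishing statement (\ref{mun-2}), for which the paper only says ``by minor modifications.'' Your parenthetical remark that one could alternatively integrate by parts and use (\ref{g}) is precisely the paper's argument, so you have in effect supplied both proofs. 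Two minor points worth being explicit about in a final write-up: the $\tau$-Carleson hypothesis applied at $t=0$ already gives $\nu([0,1))<\infty$, which you use in the vanishing case; and in the cut-off argument the inequality $2^{-k_0}>\delta/2$ (from minimality of $k_0$) is what justifies the bound $t\le 1-\delta/2$ on the low-$k$ blocks.
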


\begin{proof}
We first consider the case when $x>0, n\geq 2$, we get from integration by parts that
\begin{eqnarray}
\mu_{\lambda}[x+n]&=&\int_{0}^1 t^{x+n-1}d\nu(t) \nonumber \\&=&\nu([0,1))-(x+n-1)\int_{0}^1 t^{x+n-1}\nu([0, t))dt \nonumber \\
&=& (x+n-1)\int_{0}^1 t^{x+n-2}\nu([t, 1))dt.\nonumber
\end{eqnarray}

If $\nu$ is a $[1+\frac{1}{p}(\beta-\alpha)]$-Carleson measure on $[0, 1)$, then we see that there is a constant $C_2>0$ such that
$$\nu([t,1))\leq C_2 (1-t)^{1+\frac{1}{p}(\beta-\alpha)}$$
holds for all $t\in [0,1)$. It follows that
\begin{eqnarray}\mu_{\lambda}[x+n] &\leq & C_2 (x+n-1)\int_{0}^1 t^{x+n-2}(1-t)^{1+\frac{1}{p}(\beta-\alpha)}dt\nonumber \\&=&C_2 \frac{(x+n-1)\Gamma(x+n-1)\Gamma(2+\frac{1}{p}(\beta-\alpha))}{\Gamma(x+n+1+\frac{1}{p}(\beta-\alpha))} .\nonumber \end{eqnarray}

By using (\ref{g}) again, we obtain that
$$\frac{(x+n-1)\Gamma(x+n-1)\Gamma(2+\frac{1}{p}(\beta-\alpha))}{\Gamma(x+n+1+\frac{1}{p}(\beta-\alpha))}  \asymp \frac{1}{(x+n)^{1+\frac{1}{p}(\beta-\alpha)}}.$$

Next we consider the case when $x>0, n=1$.  When $x\geq 1, n=1$, by repeating  the arguments above, we easily see that (\ref{mun-1}) also holds.

When $x\in (0,1), n=1$, we see from, $\nu$ is a finite measure on $[0,1)$, that
\begin{eqnarray}
\mu_{\lambda}[x+1]=\int_{0}^1 t^{x}d\nu(t) \leq \nu([0,1)).\nonumber
\end{eqnarray}

This implies that \begin{eqnarray}
\mu_{\lambda}[x+1] \preceq  \frac{1}{(x+1)^{1+\frac{1}{p}(\beta-\alpha)}}\nonumber
\end{eqnarray}
holds for all $x\in (0, 1)$.
It follows that $$\mu_{\lambda}[x+n] \preceq \frac{1}{(x+n)^{1+\frac{1}{p}(\beta-\alpha)}}$$
holds for all $x>0, n \in \mathbb{N}$.

Similarly, if $\nu$ is a vanishing $[1+\frac{1}{p}(\beta-\alpha)]$-Carleson measure on $[0, 1)$, by minor modifications of above arguments, we can show that (\ref{mun-2}) holds.  The lemma is proved.
\end{proof}

We start to prove Theorem \ref{m-1-3}.
\begin{proof}[Proof of "if" part of Theorem \ref{m-1-3}]
By Lemma \ref{lem} and checking the proof of Theorem \ref{m-1-2}, we see that $ \widehat{\mathbf{H}}^{\alpha, \beta}_{\lambda, \mu}$ is bounded from $L^p(\mathbb{R}_{+})$ into $l^p$, if $d\nu(t)=(1-t)^{\lambda-1}d\mu(t)$ is a $[1+\frac{1}{p}(\beta-\alpha)]$-Carleson measure on $[0, 1)$. The "if" part of Theorem \ref{m-1-3} is proved.
\end{proof}

\begin{proof}[Proof of "only if" part of Theorem \ref{m-1-3}]

In our proof, we need the following well-known estimate, see \cite{Zh}, Page 54.  Let $0<w<1$. For any $c>0$, we have
\begin{equation}\label{est}\sum_{n=1}^{\infty}n^{c-1}w^{2n}\asymp \frac{1}{(1-w^2)^c}.
\end{equation}

For any $0<w<1$, we define
\begin{equation}\label{def}f_w(x):=\begin{cases}
\quad\quad\quad\quad 0, \; \quad \;  \;\,  \text{if} \; x\in (0, 1], \\
(1-w^2)^{\frac{1}{p}}w^{\frac{2(k-1)}{p}}, \; \text{if} \; x\in (k, k+1], k\in \mathbb{N}. \\
\end{cases}
\end{equation}
Then we easily see that
$$\|f_w\|_p^p=(1-w^2)\sum_{k=1}^{\infty}w^{2(k-1)}=1. $$
Also, we set $g_w:=\{g_n\}_{n=1}^{\infty}$ with $g_{n}=(1-w^2)^{\frac{1}{q}}w^{\frac{2(n-1)}{q}}$. Then we have $\|g_w\|_q^q=1.$

In view of the boundedness of $\widehat{\mathbf{H}}^{\alpha, \beta}_{\lambda, \mu}: L^{p}(\mathbb{R}_{+}) \rightarrow l^{p}$, then we get from the duality that
\begin{eqnarray}\label{boun-1}
1 &\succeq & \sum_{n=1}^{\infty} g_n\cdot \widehat{\mathbf{H}}^{\alpha, \beta}_{\lambda, \mu}(f_w)(n) \nonumber \\
&=&(1-w^2)^{\frac{1}{q}}\sum_{n=1}^{\infty}n^{\frac{\beta}{p}}w^{\frac{2(n-1)}{q}}\cdot \int_{\mathbb{R}_{+}}\left[x^{-\frac{\alpha}{p}}f_{w}(x)\int_{0}^1t^{x+n-1}d\nu(t)\right]\,dx \nonumber \\
&\geq  &(1-w^2)^{\frac{1}{q}}\sum_{n=1}^{\infty}n^{\frac{\beta}{p}}w^{\frac{2(n-1)}{q}}\cdot \int_{\mathbb{R}_{+}}\left[x^{-\frac{\alpha}{p}}f_{w}(x)\int_{w}^1t^{x+n-1}d\nu(t)\right]\,dx.
\end{eqnarray}

We notice that
\begin{equation}\label{est-1}t^{x}\geq t^{k+1}, {\text {when}} \,\, x\in (k, k+1]\end{equation}
holds for all $ k\in \mathbb{N}, t\in (0,1)$, and
\begin{equation}\label{est-2}x^{-\frac{\alpha}{p}}\asymp k^{-\frac{\alpha}{p}}\end{equation}
holds for $x\in (k, k+1]$, $k\in \mathbb{N}$.

It follows that
\begin{eqnarray}\label{boun-1-2}
1 &\succeq  &(1-w^2)^{\frac{1}{q}}\sum_{n=1}^{\infty}n^{\frac{\beta}{p}}w^{\frac{2(n-1)}{q}}\cdot \sum_{k=1}^{\infty }\left[k^{-\frac{\alpha}{p}}(1-w^2)^{\frac{1}{p}}w^{\frac{2(k-1)}{p}}\int_{w}^1t^{k+n}d\nu(t)\right]\nonumber\\
&\geq  & (1-w^2)\nu([w,1))\sum_{n=1}^{\infty}n^{\frac{\beta}{p}}w^{\frac{2(n-1)}{q}}\cdot \sum_{k=1}^{\infty }\left[k^{-\frac{\alpha}{p}}w^{\frac{2(k-1)}{p}}w^{k+n}\right] \nonumber \\
&\geq & (1-w^2)\nu([w,1))\left[\sum_{n=1}^{\infty}n^{\frac{\beta}{p}}w^{(\frac{2}{q}+1)n}\right]\cdot \left[\sum_{k=1}^{\infty }k^{-\frac{\alpha}{p}}w^{(\frac{2}{p}+1)k}\right] \nonumber
\end{eqnarray}

Thus, by  (\ref{est}), we have
\begin{eqnarray}\label{boun-2}
1 &\succeq & (1-w^2)\nu([w,1))\cdot\frac{1}{(1-w^2)^{1+\frac{\beta}{p}}}\cdot \frac{1}{(1-w^2)^{1-\frac{\alpha}{p}}}\nonumber \end{eqnarray}

This implies that
$$\nu([w, 1))\preceq (1-w^2)^{1+\frac{1}{p}(\beta-\alpha)}$$
for all $0<w<1$. It follows that $\nu$ is a $[1+\frac{1}{p}(\beta-\alpha)]$-Carleson measure on $[0, 1)$ and the "only if" part is proved.
\end{proof}

We next prove Theorem \ref{m-1-4}.  We first show the "if" part of Theorem \ref{m-1-4}. Let $\mathfrak{N}\in \mathbb{N}$. We define the operator $\mathbf{H}^{[\mathfrak{N}]}$ as, for $f\in \mathcal{M}(\mathbb{R}_{+})$, $$\mathbf{H}^{[\mathfrak{N}]}(f)(n):=n^{\frac{\beta}{p}}\int_{\mathbb{R}_{+}}x^{-\frac{\alpha}{p}}\mu_{\lambda}[x+n]f(x)dx,$$
when $n\leq \mathfrak{N}$, and $\mathbf{H}^{[\mathfrak{N}]}(f)(n):=0,$ when $n\geq \mathfrak{N}+1$.

We see that $\mathbf{H}^{[\mathfrak{N}]}$ is a finite rank operator and hence it is compact from $L^{p}(\mathbb{R}_{+})$ into $l^{p}$.

By Lemma \ref{lem}, we see that, for any $\epsilon>0$, there is an $\widehat{\mathbf{N}}\in \mathbb{N}$ such that
$$\mu_{\lambda}[x+n] \preceq \frac{\epsilon}{(x+n)^{1+\frac{1}{p}(\beta-\alpha)}}$$
holds for all $x>0, n>\widehat{\mathbf{N}}$.

Then, we see from
\begin{eqnarray}\|(\widehat{\mathbf{H}}^{\alpha, \beta}_{\lambda, \mu}-\mathbf{H}^{[\mathfrak{N}]})f\|_{p}^p=\sum_{n=\mathfrak{N}+1}^{\infty}n^{\frac{\beta}{p}}\int_{\mathbb{R}_{+}}x^{-\frac{\alpha}{p}}\mu_{\lambda}[x+n]f(x)dx.\nonumber
\end{eqnarray}
that,
\begin{eqnarray}\|(\widehat{\mathbf{H}}^{\alpha, \beta}_{\lambda, \mu}-\mathbf{H}^{[\mathfrak{N}]})f\|_{p}^p\preceq
 {\epsilon}^p \sum_{n=\mathfrak{N}+1}^{\infty}n^{\frac{\beta}{p}}\int_{\mathbb{R}_{+}}\left|\frac{f(x)}{(x+n)^{1+\frac{1}{p}(\beta-\alpha)}}dx\right|^p,\nonumber\end{eqnarray}
when $\mathfrak{N}>\widehat{\mathbf{N}}.$

Consequently, by checking the proof of Theorem \ref{m-1-2},  we see that, for any $\epsilon>0$, it holds that
\begin{eqnarray}\|(\widehat{\mathbf{H}}^{\alpha, \beta}_{\lambda, \mu}-\mathbf{H}^{[\mathfrak{N}]})f\|_{p}\preceq
 {\epsilon}  \|f\|_p\nonumber\end{eqnarray}
for all $\mathfrak{N}>\widehat{\mathbf{N}}$. It follows that $\widehat{\mathbf{H}}^{\alpha, \beta}_{\lambda, \mu}$ is compact from $L^{p}(\mathbb{R}_{+})$ to $l^p$. This proves the "if" part.

Finally, we show the "only if" part.  For $0<w<1$.  We take $f_w$ as be defined in (\ref{def}). It is easy to check that ${f}_w$ is convergent weakly to $0$ in $L^{p}(\mathbb{R}_{+}).$

Since $\widehat{\mathbf{H}}^{\alpha, \beta}_{\lambda, \mu}$ is compact from $L^{p}(\mathbb{R}_{+})$ to $l^p$, we get
\begin{equation}\label{com-0}\lim_{w\rightarrow {1^{-}}} \|\widehat{\mathbf{H}}^{\alpha, \beta}_{\lambda, \mu}{f}_w)\|_{p}=0.\end{equation}

On the other hand, we have
\begin{eqnarray}
\|\widehat{\mathbf{H}}^{\alpha, \beta}_{\lambda, \mu}{f}_w)\|_{p}^p
&=&\sum_{n=1}^{\infty} n^{\beta} \left\{ \int_{\mathbb{R}_{+}}\left[x^{-\frac{\alpha}{p}}f_{w}(x)\int_{0}^1t^{x+n-1}d\nu(t)\right]\,dx\right\}^p\nonumber   \\
&\geq & \sum_{n=1}^{\infty} n^{\beta}\left\{ \int_{\mathbb{R}_{+}}\left[x^{-\frac{\alpha}{p}}f_{w}(x)\int_{w}^1t^{x+n-1}d\nu(t)\right]\,dx\right\}^p.\nonumber
\end{eqnarray}
It follows from (\ref{est-1}) and (\ref{est-2}) that
\begin{eqnarray}
\|\widehat{\mathbf{H}}^{\alpha, \beta}_{\lambda, \mu}{f}_w)\|_{p}^p
&\succeq&(1-w^2)\sum_{n=1}^{\infty}n^{\beta}\left\{ \sum_{k=1}^{\infty}\left[k^{-\frac{\alpha}{p}}w^{\frac{2(k-1)}{p}}\int_{w}^1t^{k+n}d\nu(t)\right]\right\}^p\nonumber
\end{eqnarray}

Consequently,  we obtain that
\begin{eqnarray}
\|\widehat{\mathbf{H}}^{\alpha, \beta}_{\lambda, \mu}{f}_w)\|_{p}^p
&\succeq&(1-w^2)[\nu([w,1))]^p\left[\sum_{n=1}^{\infty}n^{\beta}w^{pn}\right] \left[\sum_{k=1}^{\infty}k^{-\frac{\alpha}{p}}w^{\frac{2(k-1)}{p}+k}\right] ^p\nonumber
\end{eqnarray}
Then, by again (\ref{est}), we get that
\begin{eqnarray}
\|\widehat{\mathbf{H}}^{\alpha, \beta}_{\lambda, \mu}{f}_w)\|_{p}^p
&\succeq&(1-w^2)[\nu([w,1))]^p\cdot \frac{1}{(1-w^2)^{1+\beta}}\cdot \frac{1}{(1-w^2)^{p-\alpha}}.\nonumber
\end{eqnarray}

This means that
\begin{eqnarray}
\nu([w,1))\preceq \|\widehat{\mathbf{H}}^{\alpha, \beta}_{\lambda, \mu}{f}_w)\|_{p} (1-w^2)^{1+\frac{1}{p}(\beta-\alpha)}.\nonumber
\end{eqnarray}
It follows from (\ref{com-0}) that $\nu$ is a vanishing $[1+\frac{1}{p}(\beta-\alpha)$-Carleson measure on $[0, 1)$. This proves the "only if" part of Theorem \ref{m-1-4}  and the proof of Theorem \ref{m-1-4} is completed.

\section{Final Remarks}

\begin{remark} We first remark that the assumptions $-1<\alpha, \beta<p-1$ in Theorem \ref{m-1-1} and \ref{m-1-2} are both necessary.

We consider the case when $\theta_1=\theta_2=\lambda=1, \alpha=\beta:=\gamma$. That is to say, we will consider the operator
$$\mathbf{H}^{\gamma, \gamma }_{1, 1, 1}(f)(n)=n^{\frac{\gamma}{p}}\int_{\mathbb{R}_{+}}
\frac{x^{-\frac{\gamma}{p}}}{x+n}f(x)\,dx, f \in \mathcal{M}(\mathbb{R}_{+}), n \in \mathbb{N}.$$

We will write $\widehat{\mathbf{H}}_{\gamma}$ to denote $\mathbf{H}^{\gamma, \gamma }_{1, 1, 1}$. We shall show that

\begin{proposition}$\widehat{\mathbf{H}}_{\gamma}$ is not bounded from $L^{p}(\mathbb{R}_{+})$ into $l^p$, if $\gamma\leq -1$, or $\gamma\geq p-1$.\end{proposition}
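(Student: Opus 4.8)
The plan is to refute boundedness separately on the two ranges $\gamma\ge p-1$ and $\gamma\le -1$, in each case by testing $\widehat{\mathbf{H}}_{\gamma}$ on one of the functions already used in Sections 2 and 3; no new idea beyond those sharpness computations is needed.

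For $\gamma\ge p-1$ I would take $f=\chi_{(0,1)}$, the characteristic function of $(0,1)$, so that $\|f\|_p=1$. Since $x+n\le 2n$ whenever $x\in(0,1)$ and $n\in\mathbb N$, one has
\[\widehat{\mathbf{H}}_{\gamma}(f)(n)\ \ge\ \frac12\,n^{\frac{\gamma}{p}-1}\int_0^1 x^{-\frac{\gamma}{p}}\,dx .\]
If $\gamma\ge p$ the integral diverges, so $\widehat{\mathbf{H}}_{\gamma}(f)(n)=+\infty$ for every $n$; if $p-1\le\gamma<p$ the integral equals a finite positive constant $c_\gamma$, and then
\[\|\widehat{\mathbf{H}}_{\gamma} f\|_p^p\ \ge\ \Big(\frac{c_\gamma}{2}\Big)^p\sum_{n=1}^\infty n^{\gamma-p}\ =\ +\infty\]
because $\gamma-p\ge -1$. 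In either case $f\in L^p(\mathbb R_+)$ but $\widehat{\mathbf{H}}_{\gamma} f\notin l^p$, so $\widehat{\mathbf{H}}_{\gamma}$ is not bounded from $L^p(\mathbb R_+)$ into $l^p$.

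For $\gamma\le -1$ I would reuse the family from the proof of Theorem \ref{m-1-2} with $\theta_1=1$, namely $\widetilde f(x)=\varepsilon^{1/p}x^{-(1+\varepsilon)/p}$ for $x\ge1$ and $\widetilde f(x)=0$ for $x\in(0,1)$, which satisfies $\|\widetilde f\|_p=1$. The same change of variables $x=nt$ used there gives
\[\widehat{\mathbf{H}}_{\gamma}(\widetilde f)(n)=\varepsilon^{1/p}\,n^{-\frac{1+\varepsilon}{p}}\int_{1/n}^\infty\frac{t^{-\frac{\gamma+1+\varepsilon}{p}}}{1+t}\,dt\ \ge\ \varepsilon^{1/p}\,n^{-\frac{1+\varepsilon}{p}}\int_1^\infty\frac{t^{-\frac{\varepsilon}{p}}}{1+t}\,dt ,\]
where the inequality uses $1/n\le1$ and $t^{-(\gamma+1)/p}\ge 1$ for $t\ge1$, the latter being valid precisely because $\gamma+1\le 0$. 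Bounding $1+t\le 2t$ on $[1,\infty)$ yields $\int_1^\infty t^{-\varepsilon/p}(1+t)^{-1}\,dt\ge \frac{p}{2\varepsilon}$, hence $\widehat{\mathbf{H}}_{\gamma}(\widetilde f)(n)\ge \frac p2\,\varepsilon^{\frac1p-1}n^{-(1+\varepsilon)/p}$ and
\[\|\widehat{\mathbf{H}}_{\gamma}\widetilde f\|_p^p\ \ge\ \Big(\frac p2\Big)^p\varepsilon^{1-p}\sum_{n=1}^\infty n^{-(1+\varepsilon)}\ \ge\ \Big(\frac p2\Big)^p\varepsilon^{1-p}.\]
Since $p>1$, the last quantity tends to $+\infty$ as $\varepsilon\to0^+$, so $\sup_{\varepsilon>0}\|\widehat{\mathbf{H}}_{\gamma}\widetilde f\|_p/\|\widetilde f\|_p=+\infty$ and $\widehat{\mathbf{H}}_{\gamma}$ is again not bounded; note this single estimate also covers the borderline value $\gamma=-1$.

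The argument is entirely routine; the only two points requiring a moment's care are that $\int_0^1 x^{-\gamma/p}\,dx$ must be allowed to be infinite when $\gamma\ge p$ (so the first case is phrased to conclude from $\widehat{\mathbf{H}}_{\gamma} f\notin l^p$ regardless), and that in the second case the elementary bound $t^{-(\gamma+1)/p}\ge 1$ on $[1,\infty)$ is exactly what produces the divergent factor $\asymp\varepsilon^{-1}$. Neither is a genuine obstacle, and the whole proposition is a direct adaptation of the sharpness computations already carried out for Theorems \ref{m-1-1} and \ref{m-1-2}.
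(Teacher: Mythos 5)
Your proof is correct, and for the half $\gamma\ge p-1$ it takes a genuinely different route from the paper's. The paper handles the whole proposition with the single family $\widehat f(x)=\varepsilon^{1/p}x^{-(1+\varepsilon)/p}\chi_{[1,\infty)}(x)$ and lets $\varepsilon\to 0^{+}$: your treatment of $\gamma\le -1$ is essentially that computation, made quantitative (the explicit bound $\|\widehat{\mathbf{H}}_{\gamma}\widetilde f\|_p^p\succeq \varepsilon^{1-p}$, which blows up since $p>1$ and covers $\gamma<-1$ and $\gamma=-1$ in one stroke, whereas the paper splits these into separate cases). For $\gamma\ge p-1$ you instead test on $\chi_{(0,1)}$ and exploit the singularity of the weight $x^{-\gamma/p}$ at the origin together with the slow decay of $n^{\gamma/p-1}$; the paper has no analogue of this. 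The difference is more than stylistic: in the paper's case (2) the normalizing factor $\varepsilon$ carried by $\widehat f$ is dropped when passing to $\|\widehat{\mathbf{H}}_{\gamma}\widehat f\|_p^p$, and once it is restored the paper's lower bound becomes $[1+o(1)]\bigl[\int_1^\infty t^{-(1+\gamma+\varepsilon)/p}(1+t)^{-1}\,dt\bigr]^p$, which remains bounded (by $1$) as $\varepsilon\to 0^{+}$ whenever $1+\gamma\ge p$. So the paper's test function, which vanishes on $(0,1)$, cannot detect the unboundedness for $\gamma\ge p-1$ --- the obstruction there is local at $x=0$ --- and your characteristic-function test is exactly what closes that half of the statement.
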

\begin{proof}
For $\varepsilon>0$, we take
\begin{equation*}\widehat{f}(x):=\begin{cases}
\quad\quad 0, \; \; \; \text{if} \; x\in (0, 1], \\
\varepsilon^{\frac{1}{p}} x^{-\frac{1+\varepsilon}{p}},\; \text{if} \; x\in [1, \infty). \\
\end{cases}
\end{equation*}
Then, we easily see that $\|\widehat{f}\|_p^p=1,$ and
$$\|\widehat{\mathbf{H}}_{\gamma}\widehat{f}\|_p^p=\sum_{n=1}^{\infty}n^{\gamma}\left[\int_{1}^{\infty}
\frac{x^{-\frac{1+\gamma+\varepsilon}{p}}}{x+n}\,dx\right]^p. $$

(1) If $\gamma<-1$,  when $\varepsilon<-(\gamma+1)$, we see from $1+\gamma+\varepsilon<0$ that, for a fixed $n \geq 1$, it holds that
$$\int_{1}^{\infty}
\frac{x^{-\frac{1+\gamma+\varepsilon}{p}}}{x+n}\,dx\geq \int_{n}^{\infty}
\frac{x^{-\frac{1+\gamma+\varepsilon}{p}}}{x+n}\,dx\geq  \frac{1}{2} \int_{n}^{\infty}
x^{-1-\frac{1+\gamma+\varepsilon}{p}}\,dx=+\infty. $$
This means that $\widehat{\mathbf{H}}_{\gamma}$ is not bounded from $L^{p}(\mathbb{R}_{+})$ into $l^p$ in this case.

(2)  If $\gamma=-1$ or $\gamma \geq p-1$, we have
\begin{eqnarray}\label{w-1}\|\widehat{\mathbf{H}}_{\gamma}\widehat{f}\|_p^p&=&\sum_{n=1}^{\infty}n^{-\gamma}\left[\int_{1}^{\infty}
\frac{x^{-\frac{1+\gamma+\varepsilon}{p}}}{x+n}\,dx\right]^p=\sum_{n=1}^{\infty}n^{-1-\varepsilon}\left[\int_{\frac{1}{n}}^{\infty}
\frac{t^{-\frac{1+\gamma+\varepsilon}{p}}}{1+t}\,dt\right]^p\nonumber  \\
&\geq &\nonumber \sum_{n=1}^{\infty}n^{-1-\varepsilon}\left[\int_{1}^{\infty}
\frac{t^{-\frac{1+\gamma+\varepsilon}{p}}}{1+t}\,dt\right]^p \end{eqnarray}

We note  that
\begin{eqnarray}
\sum_{n=1}^{\infty}n^{-1-\varepsilon}=\frac{1}{\varepsilon}[1+o(1)], \, \varepsilon \rightarrow 0^{+}.\nonumber
\end{eqnarray}

Therefore, we conclude that
\begin{eqnarray}\|\widehat{\mathbf{H}}_{\gamma}\widehat{f}\|_p^p\geq  \frac{1}{\varepsilon}[1+o(1)]\left[\int_{1}^{\infty}
\frac{t^{-\frac{1+\gamma+\varepsilon}{p}}}{1+t}\,dt\right]^p.\nonumber \end{eqnarray}
Take $\varepsilon \rightarrow 0^{+}$, we get that $\|\widehat{\mathbf{H}}_{\gamma}\widehat{f}\|_p^p \rightarrow +\infty$.  This implies that $\widehat{\mathbf{H}}_{\gamma}: L^p(\mathbb{R}_{+}) \rightarrow l^p$ is not bounded when $\gamma=-1$ or $\gamma\geq p-1$.
The proposition is proved.
\end{proof}
\end{remark}

\begin{remark}
Take $\lambda=1$ in Theorem \ref{m-1-3}, we obtain that
\begin{corollary}
Let $p>1$, $-1<\alpha, \beta<p-1$.  Let $\mu$ be a finite positive Borel measure on $[0, 1)$. Then $\widehat{\mathbf{H}}_{\mu}^{\alpha, \beta} : L^p(\mathbb{R}_{+})\rightarrow l^p$ is bounded if and only if
$\mu$ is a $[1+\frac{1}{p}(\beta-\alpha)]$-Carleson measure on $[0, 1)$.
Here $$\widehat{\mathbf{H}}^{\alpha, \beta}_{\mu}(f)(n):=n^{\frac{\beta}{p}}\int_{\mathbb{R}_{+}}x^{-\frac{\alpha}{p}}\mu[x+n]f(x)dx,\, f\in  \mathcal{M}(\mathbb{R}_{+}), \, n\in \mathbb{N},$$
and
\begin{equation*}\mu[z]:=\int_{[0, 1)}t^{z-1}d\mu(t), \, z>0.\end{equation*}
\end{corollary}

We notice that Proposition 2.1 in the recent work \cite{BSW} of Bao et. al. implies that
\begin{proposition}\label{las}
Let $r\in \mathbb{R}, s>0$ with $s>|r|$. Let  $\mu, \nu$ be two finite positive Borel measures on $[0, 1)$ with $d\nu(t)=(1-t)^{r}d\mu(t)$. Then
$\nu$ is a $(s+r)$-Carleson measure on  $[0, 1)$ if and only if $\mu$ is a $s$-Carleson measure on  $[0, 1)$.
\end{proposition}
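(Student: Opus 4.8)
The plan is to isolate a single monotone estimate and apply it twice. First I would prove the following \emph{claim}: if $\rho\in\mathbb{R}$ and $a>0$ satisfy $a+\rho>0$, and if $\sigma,\tau$ are finite positive Borel measures on $[0,1)$ with $d\tau(t)=(1-t)^{\rho}\,d\sigma(t)$, then $\sigma$ being an $a$-Carleson measure implies $\tau$ is an $(a+\rho)$-Carleson measure. Granting the claim, Proposition~\ref{las} is immediate. The implication ``$\mu$ $s$-Carleson $\Rightarrow$ $\nu$ $(s+r)$-Carleson'' is the claim with $(\sigma,\tau,a,\rho)=(\mu,\nu,s,r)$, which is legitimate since $s>|r|$ forces $a+\rho=s+r>0$ (and $a=s>0$). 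The reverse implication is the claim applied to the equivalent relation $d\mu(t)=(1-t)^{-r}\,d\nu(t)$ with $(\sigma,\tau,a,\rho)=(\nu,\mu,s+r,-r)$: here $a=s+r>0$ because $s>|r|$, while $a+\rho=(s+r)+(-r)=s>0$, so the claim applies and yields that $\mu$ is $s$-Carleson.

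To prove the claim I would decompose the tail interval dyadically. Fix $t\in[0,1)$ and set $\ell=1-t$. Write $[t,1)=\bigcup_{k\ge0}I_k$ with $I_k=[\,1-2^{-k}\ell,\,1-2^{-k-1}\ell\,)$; these are pairwise disjoint and their union is $[t,1)$. On $I_k$ one has $2^{-k-1}\ell\le 1-u\le 2^{-k}\ell$, hence $(1-u)^{\rho}\le 2^{|\rho|}(2^{-k}\ell)^{\rho}$, while $I_k\subseteq[\,1-2^{-k}\ell,1)$ together with the $a$-Carleson property of $\sigma$ gives $\sigma(I_k)\le \sigma([\,1-2^{-k}\ell,1))\le C(2^{-k}\ell)^{a}$. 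Summing over $k$,
\[
\tau([t,1))=\sum_{k\ge0}\int_{I_k}(1-u)^{\rho}\,d\sigma(u)\le 2^{|\rho|}C\,\ell^{a+\rho}\sum_{k\ge0}2^{-k(a+\rho)},
\]
and the geometric series converges precisely because $a+\rho>0$; thus $\tau([t,1))\preceq(1-t)^{a+\rho}$ for all $t\in[0,1)$, which is the claim. (When $\rho\ge0$ the cruder bound $\tau([t,1))\le(1-t)^{\rho}\sigma([t,1))$ already suffices, but the dyadic estimate treats the remaining case uniformly.)

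The only real obstacle is the case $\rho<0$, where the weight $(1-t)^{\rho}$ is unbounded as $t\to 1^{-}$; the dyadic splitting converts this singularity into the summable series $\sum_k 2^{-k(a+\rho)}$, and this is exactly the point where the condition $a+\rho>0$ — equivalently $s>|r|$ in the two applications above — is used. I would also note in passing that an integration-by-parts argument on the Stieltjes integral $\int_{[t,1)}(1-u)^{\rho}\,d\sigma(u)$, rewriting it through the tail function $\sigma([u,1))$, gives the same conclusion (the boundary term $(1-u)^{\rho}\sigma([u,1))$ tending to $0$ as $u\to 1^{-}$ again because $a+\rho>0$); but I would present the dyadic version in order to sidestep technicalities about atoms and one-sided continuity of the distribution function.
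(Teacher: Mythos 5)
Your proof is correct. Note, however, that the paper itself offers no proof of Proposition~\ref{las}: it is stated as a consequence of Proposition~2.1 in the cited preprint of Bao, Sun and Wulan \cite{BSW}, so there is no internal argument to compare yours against. What you supply is a complete, self-contained and elementary replacement for that citation. The reduction to a single one-directional claim is clean, and both applications of it are legitimate: in the forward direction $a=s>0$ and $a+\rho=s+r>0$ since $s>|r|\ge -r$; in the reverse direction $a=s+r>0$ and $a+\rho=s>0$. The dyadic decomposition $[t,1)=\bigcup_{k\ge 0}[1-2^{-k}\ell,\,1-2^{-k-1}\ell)$ with $\ell=1-t$ is the standard device here, the bound $(1-u)^{\rho}\le 2^{|\rho|}(2^{-k}\ell)^{\rho}$ on each piece is valid for either sign of $\rho$, and the geometric series $\sum_k 2^{-k(a+\rho)}$ converges exactly under the hypothesis $a+\rho>0$, which is where $s>|r|$ enters. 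Your parenthetical observation that the case $\rho\ge 0$ admits the trivial bound $\tau([t,1))\le(1-t)^{\rho}\sigma([t,1))$ correctly identifies that only the singular case $\rho<0$ requires the decomposition. The alternative integration-by-parts route you sketch is essentially the mechanism the paper uses elsewhere (in the proof of Lemma~\ref{lem}) for a related estimate, so your choice to present the dyadic version instead is a reasonable way to avoid the measure-theoretic technicalities you mention.
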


On the other hand, for $p>1$, $\lambda>0$, $-1<\alpha, \beta<p-1$,  if $\alpha\leq \beta$ and $0<\lambda<2+\frac{1}{p}(\beta-\alpha)$, or $\alpha>\beta$ and $-\frac{1}{p}(\beta-\alpha)<\lambda<2+\frac{1}{p}(\beta-\alpha)$,  then it holds that $|\lambda-1|<1+\frac{1}{p}(\beta-\alpha)$.

Hence, from Proposition \ref{las} and  Theorem \ref{m-1-3}, we see that
\begin{corollary}
Let $p>1$, $\lambda>0$, $-1<\alpha, \beta<p-1$.  Let  $\mu, \nu$ be two finite positive Borel measures on $[0, 1)$ with $d\nu(t)=(1-t)^{\lambda-1}d\mu(t)$.

If $\alpha\leq \beta$ and $0<\lambda<2+\frac{1}{p}(\beta-\alpha)$, or $\alpha>\beta$ and $-\frac{1}{p}(\beta-\alpha)<\lambda<2+\frac{1}{p}(\beta-\alpha)$.
Then $\widehat{\mathbf{H}}^{\alpha, \beta}_{\lambda, \mu} : L^p(\mathbb{R}_{+})\rightarrow l^p$ is bounded if and only if one of the following two conditions is satisfied.

(1) $\mu$ is a $[2+\frac{1}{p}(\beta-\alpha)-\lambda]$-Carleson measure on $[0, 1)$.

(2) $\nu$ is a $[1+\frac{1}{p}(\beta-\alpha)]$-Carleson measure on $[0, 1)$.
\end{corollary}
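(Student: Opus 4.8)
The plan is to obtain the corollary by simply combining Theorem \ref{m-1-3} with Proposition \ref{las}. By Theorem \ref{m-1-3}, the operator $\widehat{\mathbf{H}}^{\alpha, \beta}_{\lambda, \mu}:L^p(\mathbb{R}_{+})\to l^p$ is bounded if and only if $\nu$ is a $[1+\frac{1}{p}(\beta-\alpha)]$-Carleson measure on $[0,1)$; that is, boundedness is already known to be equivalent to condition (2). Hence it suffices to prove that, under the hypotheses on $\lambda,\alpha,\beta$ stated in the corollary, conditions (1) and (2) are equivalent.

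To establish (1) $\Longleftrightarrow$ (2) I would apply Proposition \ref{las} with the roles of its two measures interchanged relative to the relation $d\nu(t)=(1-t)^{\lambda-1}d\mu(t)$. Rewriting this as $d\mu(t)=(1-t)^{1-\lambda}d\nu(t)$, I invoke Proposition \ref{las} taking the base measure to be $\nu$, the weighted measure to be $\mu$, the weight exponent $r=1-\lambda$, and $s=1+\frac{1}{p}(\beta-\alpha)$. Then $s+r=2+\frac{1}{p}(\beta-\alpha)-\lambda$, and the conclusion of Proposition \ref{las} reads exactly: $\mu$ is a $[2+\frac{1}{p}(\beta-\alpha)-\lambda]$-Carleson measure on $[0,1)$ if and only if $\nu$ is a $[1+\frac{1}{p}(\beta-\alpha)]$-Carleson measure on $[0,1)$, i.e. (1) $\Longleftrightarrow$ (2). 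Both $\mu$ and $\nu$ are finite positive Borel measures on $[0,1)$ by hypothesis, so the measure-theoretic requirements of Proposition \ref{las} are met.

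It then remains to verify the two numerical hypotheses of Proposition \ref{las}, namely $s>0$ and $s>|r|$. Since $-1<\alpha,\beta<p-1$ we have $\beta-\alpha>-p$, hence $s=1+\frac{1}{p}(\beta-\alpha)>0$. For the second, $|r|=|1-\lambda|=|\lambda-1|$, so we must check $|\lambda-1|<1+\frac{1}{p}(\beta-\alpha)$; equivalently $-\frac{1}{p}(\beta-\alpha)<\lambda<2+\frac{1}{p}(\beta-\alpha)$. This is precisely the inequality recorded in the remark preceding the statement, proved by a short case analysis: when $\alpha\le\beta$ one has $\frac{1}{p}(\beta-\alpha)\ge 0$, so the assumed range $0<\lambda<2+\frac{1}{p}(\beta-\alpha)$ already lies inside $(-\frac{1}{p}(\beta-\alpha),\,2+\frac{1}{p}(\beta-\alpha))$; when $\alpha>\beta$ the assumed range of $\lambda$ is exactly $(-\frac{1}{p}(\beta-\alpha),\,2+\frac{1}{p}(\beta-\alpha))$, which is nonempty because $\frac{1}{p}(\beta-\alpha)>-1$. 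With both hypotheses verified, Proposition \ref{las} yields (1) $\Longleftrightarrow$ (2), and together with Theorem \ref{m-1-3} this proves the corollary. The only point requiring care is the bookkeeping of which measure plays which role in Proposition \ref{las} together with the sign of the weight exponent: using $d\mu(t)=(1-t)^{1-\lambda}d\nu(t)$ (rather than the original relation) is what makes the exponent $s+r$ reduce to $2+\frac{1}{p}(\beta-\alpha)-\lambda$ and the hypothesis $s>|r|$ coincide with the ready-made inequality $|\lambda-1|<1+\frac{1}{p}(\beta-\alpha)$; no further estimates are needed.
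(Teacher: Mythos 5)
Your proposal is correct and follows essentially the same route as the paper: combine Theorem \ref{m-1-3} (boundedness $\Leftrightarrow$ condition (2)) with Proposition \ref{las} applied to the relation $d\mu(t)=(1-t)^{1-\lambda}d\nu(t)$, where the hypothesis $s>|r|$ becomes exactly the inequality $|\lambda-1|<1+\frac{1}{p}(\beta-\alpha)$ that the paper verifies from the stated ranges of $\lambda$. Your write-up merely makes explicit the role-swap of the two measures that the paper leaves implicit.
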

\end{remark}


\begin{remark}

Let $\mathbb{D}$ be the unit disk in the complex plane $\mathbb{C}$.  We denote by $\mathcal{H}(\mathbb{D})$ the class of all analytic functions on $\mathbb{D}$.  Let $0<p<\infty$, the Hardy space $H^p(\mathbb{D})$ is the class of all $f\in \mathcal{H}(\mathbb{D})$  such that
$$\|f\|_{H^p}=\sup_{r\in (0, 1)}M_{p}(r, f)<\infty,$$
where $$M_{p}(r ,f)=\{\frac{1}{2\pi}\int_{0}^{2\pi}|f(re^{i\theta})|^p\,d\theta\}^{\frac{1}{p}},\:0<r<1.$$
It is well known that a function $f(z)=\sum_{n=0}^{\infty}a_n z^n \in \mathcal{H}(\mathbb{D})$ belongs to  $H^2(\mathbb{D})$ if and only if $\sum_{n=0}^{\infty}|a_n|^2<+\infty$.
See \cite{D} for more introductions to the theory of Hardy spaces.

We define
$$\mathcal{H}(f)(z):=\sum_{n=0}^{\infty}\left [\int_{\mathbb{R}_{+}}\frac{f(x)}{x+n+1}\,dx\right]z^n, \, f\in \mathcal{M}(\mathbb{R}_{+}), z\in \mathbb{C}.$$
Then, we see from Theorem \ref{m-1-1} that $\mathcal{H}$ is bounded from $L^{2}(\mathbb{R}_{+})$ into $H^2(\mathbb{D})$.

It is natural to consider the following
\begin{question}
Whether $\mathcal{H}$ is bounded from $L^{p}(\mathbb{R}_{+})$ into $H^p(\mathbb{D})$ for any $p>0$?
\end{question}
\end{remark}






\end{document}